\newcommand{\lt}{\left}
\newcommand{\rt}{\right}
\newcommand{\half}{\frac{1}{2}}
\newcommand{\sumkinf}[2]{\sum_{#1 = #2}^\infty}
\newcommand{\re}{\operatorname{Re}}
\newcommand{\im}{\operatorname{Im}}
\newcommand{\res}[1]{\operatorname{Res}_{#1}}
\newcommand{\conj}[1]{\overline{#1}}
\newcommand{\inttoinf}[1]{\int_{#1}^\infty}
\newcommand{\intconst}{\frac{1}{2 \pi i}}
\newcommand{\intline}[1]{\int_{(#1)}}
\newcommand{\invmellin}[1]{\intconst \intline{#1}}
\newcommand{\dblinvmellin}[2]{\left(\intconst \right)^2 \intline{#1} \intline{#2}}
\newcommand{\triinvmellin}[3]{\left(\intconst \right)^3 \intline{#1} \intline{#2} \intline{#3}}
\newcommand{\quadinvmellin}[4]{\left(\intconst \right)^4 \intline{#1} \intline{#2} \intline{#3} \intline{#4}}
\newcommand{\betaF}[3]{\frac{\Gamma(#1) \Gamma(#2)}{\Gamma(#3)}}
\newcommand{\qqquad}{\qquad \qquad}
\newcommand{\qqqquad}{\qqquad \qqquad}
\newtheorem{theorem}{Theorem}
\newtheorem{proposition}{Proposition}
\newtheorem{lemma}{Lemma}
\newtheorem*{remark}{Remark}
\numberwithin{equation}{section}
\numberwithin{equation}{subsection}
\numberwithin{lemma}{section}
\numberwithin{proposition}{section}
\begin{document}
\title{Hybrid Bounds on Twisted L-Functions Associated to Modular Forms}
\author{Chan Ieong Kuan}
\date{\today}
\begin{abstract}
  For $f$ a primitive holomorphic cusp form of even weight $k \geq 4$, level $N$, and $\chi$ a Dirichlet character mod $Q$ with $(Q,N) = 1$, we establish the following hybrid subcovex bound for $t \in \mathbb{R}$:
\[ L(\half + it, f_\chi) \ll Q^{\frac{3}{8} + \varepsilon} (1+|t|)^{\frac{1}{3-2\theta} + \varepsilon} \]
where $\theta$ is the best bound toward the Ramanujan-Petersson conjecture at the infinite place. The implied constant only depends on $f$ and $\varepsilon$. This is done via amplification and taking advantage of a shifted convolution sum of two variables as defined and analyzed in \cite{Jeff}.
\end{abstract}
\maketitle


\section{Introduction} \label{sec:intro}
\subsection{Hybrid Bounds}
The growth of $L$-functions on the critical line $\re s = \half$  has been one of the most studied problems in analytic number theory. This paper is concerned with $L$-functions of a holomorphic cusp form $f$, twisted by a character $\chi$ of conductor $Q$. By using functional equation and Phragm\'en-Lindel\"of principle, one can obtain the convexity bound
\[ L(\half+it, f_\chi) \ll (Q(1+|t|))^{\half+\varepsilon}, \]
where we suppress the level and weight aspects here.

Throughout the years, there have been many attempts at lowering the exponents, most of which have focused on one chosen aspect. Since our result concerns $Q$- and $t$-aspects, we will state some known results in these directions.

In the $t$-aspect, Good showed in \cite{good1983square} that for $f$ a holomorphic cusp form of the full modular group,
\[ L(\half+it,f) \ll (1+|t|)^{\frac{1}{3} + \varepsilon} \]
Meurman showed the same result for $f$ Maass forms of full modular group in \cite{meurman1990}. For number fields, subconvexity results were proved in Petridis and Sarnak \cite{PetridisSarnak2001} and Diaconu and Garrett \cite{Diaconu2010}.

In the $Q$-aspect, the first subconvexity result was obtained by Duke, Friedlander and Iwaniec \cite{Duke1993} for holomorphic cusp forms of full level. Later, Bykovskii showed in \cite{bykovskii1998trace} that for general level,
\[ L(\half+it, f_\chi) \ll_t Q^{\frac{3}{8} + \varepsilon}, \]
with a polynomial dependence in $(1+|t|)$, provided that the nebentypus of $f$ is trivial. This same bound without the nebentypus restriction is obtained in Hoffstein and Hulse \cite{Jeff}, and Blomer and Harcos \cite{blomer2008hybrid}. In \cite{blomer2008hybrid}, $f$ can also be taken as a Maass form.

Hhybrid bounds in $Q$- and $t$-aspects have been worked on by Blomer and Harcos in \cite{blomer2008hybrid}, Munshi \cite{munshi2012circle} and Han Wu \cite{Wu2012Burgess}, which, following the method of Michel and Venkatesh, uses amplification. The bound obtained is:
\[ L(\half+it,f_\chi) \ll (Q(1+|t|))^{\frac{3}{8} + \frac{\theta}{4} + \varepsilon}, \]
where no complementary series with parameter $>\theta$ appears as a component of a cuspidal automorphic representation of $GL_2(\mathbb{A})$.

One thing to note is that these hybrid bounds do not reach the best known exponents in the $t$-aspect. In this work, we partially resolve this situation by proving the following result.
\begin{theorem} \label{thm_main}
For $f$ a primitive holomorphic cusp form of even weight $k \geq 4$, level $N$, and $\chi$ a Dirichlet character mod $Q$, where $(Q,N) = 1$, we have
  \[ L(\half + it, f_\chi) \ll (1+|t|)^{\frac{1}{3-2\theta} + \varepsilon} Q^{\frac{3}{8}+\frac{\theta}{4} + \varepsilon}, \]
where $\theta$ is a bound toward the Ramanujan-Petersson conjecture. 
\end{theorem}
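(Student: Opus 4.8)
\bigskip

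\noindent\textit{Proof proposal.}
The plan is to prove the theorem by amplification, reducing the second moment of $L(\half+it,f_\chi)$ over the Dirichlet characters to modulus $Q$ to the two-variable shifted convolution sum of \cite{Jeff}. First I would apply the approximate functional equation to $L(\half+it,f_\chi)$, whose analytic conductor is $\asymp NQ^2(1+|t|)^2$; this reduces the problem to bounding, for $X$ up to size $(Q(1+|t|))^{1+\varepsilon}$ (the level being suppressed), a smoothly weighted sum $\sum_n\lambda_f(n)\chi(n)n^{-it}W(n/X)$, equivalently to bounding $|L(\half+it,f_\chi)|^2$. I would then embed the latter into the amplified moment
\[ \mathcal{M}\;=\;\sum_{\psi\bmod Q}\Bigl|\sum_{\ell\sim L}x_\ell\,\psi(\ell)\Bigr|^2\,\bigl|L(\half+it,f_\psi)\bigr|^2, \]
with the amplifier built from $\chi$ (for example $x_\ell=\overline{\chi(\ell)}$ on primes $\ell\sim L$, so that the $\psi=\chi$ term is $\gg L^{2-\varepsilon}|L(\half+it,f_\chi)|^2$), handling non-primitive $\psi$ by M\"obius inversion. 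Expanding the amplifier, inserting the approximate functional equation for $|L(\half+it,f_\psi)|^2$, and invoking orthogonality of characters modulo $Q$ collapses the $\psi$-sum to the congruence $\ell_1 m\equiv\ell_2 n\pmod Q$ with $(\ell_1\ell_2 mn,Q)=1$.

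The diagonal $\ell_1 m=\ell_2 n$ is routine: the case $\ell_1=\ell_2$ dominates, and after division by the amplifier at $\chi$ it contributes a term of convexity strength divided by $L$, which is harmless, while the terms $\ell_1\neq\ell_2$ force $\ell_2\mid m$, $\ell_1\mid n$ and are of lower order. The off-diagonal is the heart of the matter: there $\ell_1 m-\ell_2 n=Qh$ with $0<|h|\ll LX/Q$, and for each fixed $h,\ell_1,\ell_2$ one is left with a sum
\[ D_{h,\ell_1,\ell_2}(t)\;=\;\sum_{\ell_1 m-\ell_2 n=Qh}\frac{\lambda_f(m)\,\overline{\lambda_f(n)}}{\sqrt{mn}}\Bigl(\tfrac{n}{m}\Bigr)^{it}W\!\Bigl(\tfrac{m}{M},\tfrac{n}{N}\Bigr), \]
a shifted convolution sum with a large shift $\asymp Q|h|$, dilations $\ell_1,\ell_2$, two (possibly unequal, possibly large) scales $M,N\ll X^{1+\varepsilon}$, and an archimedean oscillation $(n/m)^{it}$ of conductor $\asymp|t|$. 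This is exactly the ``shifted convolution sum of two variables'' defined and analysed in \cite{Jeff}, from which I would quote the bound for $D_{h,\ell_1,\ell_2}(t)$ uniform in $Q$, in $t$, in the shift, and in the dilations; the best progress $\theta$ toward Ramanujan--Petersson at the infinite place enters this bound through the spectral expansion underlying the shifted convolution, whose non-holomorphic contributions are not controlled by Deligne's theorem.

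It then remains to reassemble: summing the estimate for $D_{h,\ell_1,\ell_2}(t)$ over $0<|h|\ll LX/Q$ and over $\ell_1,\ell_2\sim L$, weighting appropriately, and dividing by the amplifier at $\chi$ yields a bound for $|L(\half+it,f_\chi)|^2$ in which the off-diagonal contribution grows with $L$ while the diagonal contribution decays like $1/L$; optimising $L$ (it turns out to be of size roughly $Q^{1/4}$ times a power of $(1+|t|)$) produces the exponents $Q^{3/8+\varepsilon}(1+|t|)^{1/(3-2\theta)+\varepsilon}$, and as a consistency check $\theta=0$ recovers Bykovskii's $Q^{3/8}$ together with Good's $(1+|t|)^{1/3}$. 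I expect the main obstacle to be precisely this uniform estimate for $D_{h,\ell_1,\ell_2}(t)$: one must extract cancellation past the diagonal of a shifted convolution whose shift $\asymp Q|h|$ is large, whose two summation variables live at different large scales, and which is twisted by $(n/m)^{it}$, and it is the simultaneous control of all three features by the spectral machinery of \cite{Jeff}, rather than Good's unconditional $t$-aspect argument, that is responsible for the loss measured by $\theta$; by contrast, truncating the approximate functional equation uniformly in $t$, the coprimality and primitivity bookkeeping, and the standard lower bound for $L(1,\mathrm{sym}^2 f)$ are all routine.
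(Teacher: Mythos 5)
Your outline matches the paper's skeleton in the $Q$-aspect — amplification over $\psi \bmod Q$ with the amplifier $\overline{\chi(\ell)}$ on primes $\ell \sim L$, orthogonality producing the congruence $\ell_1 m \equiv \ell_2 n \pmod Q$, a diagonal/off-diagonal split, reduction of the off-diagonal to the two-variable shifted convolution of \cite{Jeff}, and the choice $L \asymp Q^{1/4}$ — but it has two genuine gaps, and they are exactly where the hybrid exponent comes from. First, you have no mechanism for the $t$-aspect. The paper does a \emph{second} amplification at the archimedean place (Lemma \ref{lemma_one_use}): it averages $|L|^2$ over a window $|r|\le A$ and then over $r'$ against a Cauchy kernel of width $G$, which after Parseval produces the localizing weight $e^{-G|\log(\ell_1 m_1/\ell_2 m_2)|}$ in the off-diagonal. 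This restricts the shifts to $h Q \ll \ell_2 m_2/G$ rather than your range $0<|h|\ll LX/Q$, and it is the optimization of $G$ (forced to be $\asymp (1+|t|)^{2/(3-2\theta)}\log^5 Q$ by Lemma \ref{propo_crazy}) that produces the exponent $1/(3-2\theta)$. In your version the entire $t$-dependence must be carried by the oscillation $(n/m)^{it}$ inside the shifted convolution sum, and you give no argument for how $\theta$ would then enter with that particular exponent.

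Second, the step ``quote the bound for $D_{h,\ell_1,\ell_2}(t)$ uniform in $Q$, $t$, the shift, and the dilations from \cite{Jeff}'' is not available: \cite{Jeff} supplies the meromorphic continuation and spectral expansion of the double Dirichlet series $Z_Q(s,w)$ (equation \eqref{eq_Z_decomp}), not a ready-made uniform estimate for individual smoothed, $t$-twisted, truncated shifted convolution sums. Converting the off-diagonal into a four-fold Mellin integral against $Z_Q(s,w)$ (Proposition \ref{eq_S_o1_int}), tracking the $i\wtilde t$ through the ratio of Gamma functions, shifting contours in all variables, and bounding the discrete and continuous spectral contributions (Propositions \ref{eq_S_o1d_Zres_propo}--\ref{eq_S_o1c_Zmoved_propo}, with the delicate Stirling analysis of Lemma \ref{propo_crazy} balancing the competing $(1+|\wtilde t|)$-powers) is the bulk of the paper; it cannot be replaced by a citation. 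A smaller but real issue: since the paper's smooth cutoff has $x\to\infty$, the $\log x$ main terms of the diagonal must cancel against the continuous-spectrum residues of the off-diagonal (the remark after Proposition \ref{propo_total_cts}); with a finite approximate functional equation you would avoid this, but you would then need to justify uniformity of the truncation in $t$ inside the double averaging, which your sketch does not address because the averaging itself is absent.
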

\begin{remark}
  A bound of $\theta$ for congruence subgroups of $\operatorname{SL}_2(\mathbb{Z})$ is $\frac{7}{64}$ by Kim and Sarnak \cite{Kim:2003aa}. It should be noted that our theorem does not currently cover the case of Maass forms as the corresponding shifted convolution is not analyzed yet.
\end{remark}
Our work also uses amplification. The major difference between this work and \cite{Wu2012Burgess} is that we treat non-Archimedian and Archimedian places differently while Wu treated them uniformly. As such, we need more precise control on the $t$-aspect, which is achieved by relating the problem to the shifted convolution sum of two variables analyzed in \cite{Jeff}.

\subsection{Structure of this paper}
Our goal is to bound $L(\half + it, f_\chi)$ in the $Q$ and $t$-aspects. First, we quote relevant results from \cite{Jeff} in section \ref{sec:prep}. We then apply amplification methods in section \ref{sec:amp}, reducing the problem to understanding the growth in $Q$ and $t$ of the following expression, where $G$ and $\mathcal{L}$ are amplification parameters and $\alpha = \frac{1}{\log (Q(1+|t|))}$.
\begin{align*}
&\varphi(Q) G \sum_{\substack{l_1,l_2 \sim \mathcal{L} \\ l_1,l_2 \text{ prime}}} \conj{\chi(l_1)} \chi(l_2) \sum_{\substack{m_1,m_2 \geq 1 \\ m_1l_1 \equiv m_2l_2 (Q)}} \frac{A(m_1)\conj{A(m_2)}(l_1l_2)^{\alpha}}{m_1^{\half+it} m_2^{\half-it}} \notag \allowdisplaybreaks[0]\\
 &\phantom{\varphi(Q) \sum_{l_1,l_2} \conj{\chi(l_1)}\chi(l_2)} \qquad \qquad \qquad \times   V \lt( \frac{m_1}{x} \rt) V \lt( \frac{m_2}{x} \rt) e^{-G \lt| \log \lt(\tfrac{l_1m_1}{l_2m_2} \rt) \rt|} 
\end{align*}
We then separate the analysis of this expression into the ``diagonal" portion ($m_1l_1 = m_2l_2$) and two ``off-diagonal" portions ($m_1l_1 = m_2l_2 + h_0Q$ and $m_2l_2 = m_1l_1 + h_1Q$, for $h_0, h_1 \geq 1$).

In section \ref{sec:diag}, we analyze the diagonal term with inverse Mellin transforms (propositions \ref{eq_S_d1_prop} and \ref{eq_S_d2_propo}).

For the off-diagonal portions, the first thing to note is they have the same contribution up to conjugation. Our analysis relies heavily on the shifted convolution sum of two variables $Z_Q(s,w)$ from \cite{Jeff}. By inverse Mellin transforms, we relate the off-diagonal term to a four-fold integral involving $Z_Q(s,w)$. This is done in section \ref{sec:off-d-setup} (proposition \ref{eq_S_o1_int}).

The analysis of the off-diagonal then splits into the discrete part and the continuous part, due to the fact that such a splitting exists for $Z_Q(s,w)$. The analysis of each part is done by moving lines of integration, with the primary goal of reducing the $x$-exponent as much as possible, and a secondary goal of reducing contribution of the $t$-aspect where possible. The results can be found in propositions \ref{eq_S_o1d_Zres_propo}, \ref{eq_S_o1d_Z_sp2}, and \ref{propo_total_cts}.

In the last section, we put the results of the previous sections together. Choosing $\mathcal{L}$ and $G$ optimally yields the theorem.

\section*{Acknowledgements}
  I would like to thank my Ph.D. advisor Jeffrey Hoffstein at Brown University for suggesting me this problem and making his manuscript available. I would also like to thank Min Lee for many helpful conversations and Gergely Harcos for pointing out a few mistakes in an older version of this paper.

\section{Preparations}\label{sec:prep}
Throughout this paper, fix a holomorphic cusp form $f$ of even weight $k \geq 4$, level $N$:
\[ f(z) = \sumkinf{n}{1} A(n) n^{\frac{k-1}{2}} e(nz) = \sumkinf{n}{1} a(n) e(nz), e(z) = e^{2\pi iz} \]

\subsection{Shifted convolution of two variables}
The most crucial object of this paper is the shifted convolution of two variables $Z_Q(s,w)$, analyzed in \cite{Jeff}. We quote several of the results here for convenience. Fix $\ell_1,\ell_2$ be primes relatively prime to $NQ$ and of size $\mathcal{L}$. The definition of $Z_Q(s,w)$ is as follows:
\begin{equation} \label{eq_Zq_defn2}
   Z_Q(s,w) := \sum_{\substack{h_0,m_2 \geq 1 \\ m_1l_1 = m_2l_2 + h_0Q}} \frac{A(m_1) \conj{A(m_2)} \left( 1+\frac{h_0Q}{l_2m_2} \right)^{\frac{k-1}{2}}}{(l_2m_2)^s (h_0Q)^{w + \frac{k-1}{2}}}
\end{equation}
In \cite{Jeff}, it is shown that
\[ \lim_{\delta \to 0} Z_Q(s,u;\delta) = Z_Q(s,u) \]
and that $Z_Q(s,u;\delta)$ has the following spectral expansion
\begin{align}
  &Z_Q(s,s'-s-\tfrac{k}{2} + 1;\delta) \notag\\ =& \frac{(4\pi)^k (l_1l_2)^{\frac{k-1}{2}} 2^{s-\half}}{\Gamma(s+k-1) 2\sqrt{\pi}} \Bigg( \sum_j L_Q(s',\conj{u_j}) M(s,t_j,\delta) \conj{\langle U, u_j \rangle} \notag \\
  &\qquad + \mathcal{V}_{N[l_1,l_2]} \sum_{\mathfrak{a}} \invmellin{0} \zeta_{\mathfrak{a},Q}(s',-z) M(s,\tfrac{z}{i},\delta) \conj{ \langle U,E_\mathfrak{a}(*,\half+z) \rangle} \,d  z \Bigg) \label{eq_Z_decomp}
\end{align}
where the $\mathfrak{a}$-sum is over cusps of $\Gamma_0(N[l_1,l_2])$ and
\begin{align}
  L_Q(s',\conj{u_j}) :&= \sum_{h \geq 1} \frac{\conj{\rho_j(-hQ)}}{(hQ)^{s'}}, \notag \\
  \zeta_{\mathfrak{a},Q}(s',z) :&= \sum_{h \geq 1} \frac{\rho_\mathfrak{a}(-hQ,z)}{2(hQ)^{s'}}, \notag \\
  U(z) :&= y^k f(l_1z) \conj{f(l_2z)}, \label{eq_L_kappa_U} \\
  \mathcal{V}_{N} :&= \frac{\pi [\operatorname{SL}_2(\mathbb{Z}):\Gamma_0(N)]}{3} \notag
\end{align}
$\rho_j(n)$ being $n$-th Fourier coefficient of Maass form $u_j$ and $\rho_{\mathfrak{a}}(n,z)$ being $n$-th Fourier coefficient of Eisenstein series at cusp $\mathfrak{a}$ with holormophic argument at $\half + z$. $[l_1,l_2]$ denotes the least common multiple of $l_1$ and $l_2$.

In \cite{Jeff}, it is also shown that in \eqref{eq_Z_decomp}, if we are to take the limit as $\delta$ goes to $0$, we actually require $\re s < \half - \frac{k}{2}$ for the sum and integral there to be absolutely convergent. 

\subsection{Some useful analytic information}
The properties of the $M$ and $Z$ functions that are relevant for this work are quoted in the following two propositions:
\begin{proposition} \label{eq_M_s_poles}
Let $z \in \mathbb{C} - \half \mathbb{Z}$. Then $M(s,\tfrac{z}{i};\delta)$ has simple poles at $s = \half \pm z - r$, for $r$ a nonnegative integer. We denote the following:
  \[ \res{s=\half \pm z - r} M(s,\tfrac{z}{i}; \delta) = c_r(\pm z; \delta), \]
where $c_r(\pm z,\delta)$ has the following explicit expression as $\delta \to 0$:
  \[ \lim_{\delta \to 0} c_r(\pm z, \delta) = \frac{(-1)^r \sqrt{\pi} 2^{\mp z + r} \Gamma(\pm 2z - r) \Gamma(\half \mp z + r)}{r! \Gamma(\half + z) \Gamma(\half - z)} \]
And we have the following values at $z = \pm \half$, as $\delta \to 0$:
  \begin{align}
     c_r(-\half;\delta) &\to -\frac{2^{r+\half} \sqrt{\pi}}{2(r+1)!}  \text{ for } r \geq 0 \\
     c_0(\half; \delta) &\to \sqrt{\frac{\pi}{2}} \\
     c_r(\half; \delta) &\to \frac{2^{r-\half} \sqrt{\pi}}{2r!} \text{ for } r \geq 1
  \end{align}
Also for $\re (s+z) \leq \half + \max(0, |\re z|)$, with $s$ and $z$ at least a distance $\varepsilon$ away from poles,
  \begin{equation} \label{eq_M_delta0}
    \lim_{\delta \to 0} M(s,\tfrac{z}{i}; \delta) = \frac{\sqrt{\pi} 2^{\half - s} \Gamma(s-\half+z) \Gamma(s-\half-z) \Gamma(1-s)}{\Gamma(\half+z) \Gamma(\half - z)}
  \end{equation}
\end{proposition}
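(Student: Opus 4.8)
The plan is to work from the integral representation of $M(s,\tfrac{z}{i};\delta)$ supplied in \cite{Jeff} — a $\delta$-regularized Mellin transform of a Whittaker-function kernel, which is what appears when the Poincar\'e-type series underlying $Z_Q$ is unfolded against the spectral basis — and to proceed in five steps: (i) verify absolute convergence of the defining integral in a right half-plane in $s$, so that $M(\,\cdot\,,\tfrac{z}{i};\delta)$ is holomorphic there and extends meromorphically; (ii) evaluate the integral in closed form; (iii) read off the $s$-poles and their residues; (iv) take $\delta\to 0$; and (v) handle the excluded locus $z\in\tfrac12\mathbb{Z}$ separately. The evaluation in (ii) should produce a ratio of $\Gamma$-factors of the shape appearing in \eqref{eq_M_delta0}, multiplied by a factor depending on $\delta$ (a Gauss hypergeometric factor in a variable tending to $0$, as is standard for the Mellin transform of a Whittaker function twisted by $e^{-\delta y}$) that tends to $1$ as $\delta\to 0$.

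Given such a closed form, step (iii) is routine away from $z\in\tfrac12\mathbb{Z}$: the only $s$-poles come from $\Gamma(s-\half+z)$ and $\Gamma(s-\half-z)$, which have simple poles at $s=\half-z-r$ and $s=\half+z-r$ respectively ($r\in\mathbb{Z}_{\ge0}$); using $\operatorname{Res}_{w=-r}\Gamma(w)=(-1)^r/r!$ and evaluating the remaining factors at $s=\half\pm z-r$ gives $c_r(\pm z;\delta)$, and letting $\delta\to 0$ removes the hypergeometric factor and leaves the stated closed form for $\lim_{\delta\to0}c_r(\pm z,\delta)$. The same evaluation at a generic $s$, combined with the region of absolute convergence of the defining integral, yields \eqref{eq_M_delta0} on the range $\re(s+z)\le\half+\max(0,|\re z|)$.

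The main obstacle is step (v), the behaviour at $z=\pm\half$. There the generic residue formula degenerates: the numerator $\Gamma(\pm2z-r)$ acquires a pole just as $\Gamma(\half\mp z)$ in the denominator blows up, and the two families of $s$-poles $s=\half+z-r$ and $s=\half-z-r'$ coalesce, so the naive expression is an indeterminate form and one cannot simply substitute $z=\pm\half$. I would resolve this by returning to the closed form for $\delta>0$ — where, because of the regularizing factor, nothing collides — computing the limit as $z\to\pm\half$ there with $\delta>0$ fixed, and then letting $\delta\to0$; the legitimacy of this order of limits follows from uniformity of the convergence away from the surviving poles. Carrying this out produces the stated values of $c_r(-\half;\delta)$, $c_0(\half;\delta)$ and $c_r(\half;\delta)$ for $r\ge1$; the asymmetry between $c_0$ and the $c_r$ with $r\ge1$, and between $z=\half$ and $z=-\half$, is precisely the bookkeeping of which of the two $\Gamma$-factors supplies the pole that survives the merger, which one pins down by matching leading Laurent coefficients at the coincident points.
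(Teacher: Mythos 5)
The paper offers no proof of this proposition: the appendix states explicitly that it is quoted from \cite{Jeff} for convenience, so there is no in-paper argument to compare yours against. Judged on its own terms, your proposal is a proof plan rather than a proof. The central object $M(s,\tfrac{z}{i};\delta)$ is never defined; its integral representation is only alluded to ("a $\delta$-regularized Mellin transform of a Whittaker-function kernel"), and the decisive step (ii) --- the closed-form evaluation that is supposed to produce the ratio of $\Gamma$-factors in \eqref{eq_M_delta0} times a hypergeometric factor tending to $1$ --- is asserted, not carried out. Every subsequent step (location of poles, residues, the $\delta\to 0$ limit, the region of validity of \eqref{eq_M_delta0}) is downstream of that evaluation, so until it is performed nothing is established. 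In particular the precise constant $\sqrt{\pi}\,2^{\half-s}/(\Gamma(\half+z)\Gamma(\half-z))$ and the exact shape of the admissible region $\re(s+z)\le\half+\max(0,|\re z|)$ cannot be "read off" from the shape of the answer; they have to come out of the computation you have deferred to \cite{Jeff}.

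The one part of your outline that can be checked independently of the definition of $M$ is step (v), and there your strategy is sound: the stated special values are exactly the limits of the generic residue formula as $z\to\pm\half$, obtained by cancelling the colliding poles of $\Gamma(\pm 2z-r)$ against those of $\Gamma(\half\mp z)$. For instance, writing $z=-\half+\epsilon$ in the formula for $c_r(z)$, one has
\begin{equation*}
  \frac{\Gamma(2z-r)}{\Gamma(\half+z)}=\frac{\Gamma(-1-r+2\epsilon)}{\Gamma(\epsilon)}\longrightarrow\frac{(-1)^{r+1}}{2\,(r+1)!},
\end{equation*}
while the remaining factors tend to $(-1)^r\sqrt{\pi}\,2^{r+\half}\,\Gamma(1+r)/(r!\,\Gamma(1))$, giving $-2^{r+\half}\sqrt{\pi}/(2(r+1)!)$ as claimed; the computations at $z=+\half$ are analogous, with the $r=0$ case differing because the pole is then supplied by $\Gamma(\half-z+r)$ rather than $\Gamma(2z-r)$. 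So the degenerate-limit bookkeeping you describe does reproduce the stated constants. To turn the proposal into a proof you would need to import the actual definition of $M(s,t;\delta)$ from \cite{Jeff} and execute steps (i)--(ii); as it stands the argument is circular at its core, inferring the closed form from the statement it is meant to prove.
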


\begin{proposition} \label{eq_Z_s_poles}
$Z(s,u;\delta)$ has simple poles at $s = \half \pm it_j - r$, where $r \in \mathbb{Z}_{\geq 0}$. Taking the residue at those points and $\delta \to 0$, we have the following:
  \[ \lim_{\delta \to 0} \res{s=\half + it_j - r} Z(s,s'-s-\tfrac{k}{2} + 1;\delta) = (l_1l_2)^{\frac{k-1}{2}} c_{r,j} L_Q(s'; \conj{u_j}), \]
where $c_{r,j}$ has growth, when $\widetilde{T} \gg 1$,
  \begin{equation} \label{eq_crj_growth}
    \sum_{|t_j| \sim \widetilde{T}} |c_{r,j}|^2 e^{\pi|t_j|} \ll \log (\widetilde{T}) (l_1l_2)^{-k} \widetilde{T}^{2r+1}
  \end{equation}
\end{proposition}

We will also give an explicit expression for $\zeta_{\mathfrak{a},Q}(s',-z)$:
\begin{proposition} \label{eq_Zeta_explicit}
  For cusp $\mathfrak{a}=\frac{b}{c}$ of $\Gamma_0(N)$, we write $c = c_0 c_1$, where $(c_0, \frac{N}{c}) = 1$. Then we have the following:
  \begin{align}
    &\zeta_{\mathfrak{a},Q}(s',-z) \notag \\
    =& \frac{\pi^{\frac{1}{2}-z}}{\Gamma(\frac{1}{2}-z)} \left( \frac{(c,\frac{N}{c})}{cN}\right)^{\frac{1}{2}-z} \left( \frac{c}{(c,\frac{N}{c})} \right)^{1-s'-z} \frac{Q^{-(s'+z)}}{\varphi((c,\frac{N}{c}))} \\
    &\qquad \qquad \times \sum_{\chi ((c,\frac{N}{c}))} \frac{\conj{\chi(-Qu)} L^{((c,\frac{N}{c}))}(s'+z,\conj{\chi}) L^{(\frac{N}{c})}(s'-z,\chi)}{L^{(N)}(1-2z,\chi^2)} \sum_{n | c_1^\infty} \frac{G_n(\chi)}{n^{s'+z}} \notag \\
    &\qquad \qquad \times \prod_{p | c_0} (1-p^{-(1-z-s')}\chi(p)) \prod_{p^\gamma \| Q} (\sigma_{2z}^{(\chi^2)}(p^\gamma) - \conj{\chi(p)} p^{-(s'-z)} \sigma_{2z}^{(\chi^2)}(p^{\gamma-1})) \notag
  \end{align}
  where $n | c_1^\infty$ means that $n$ runs over all integers such that $n | c_1^k$ for sufficiently large integer $k$,
  \begin{align*}
    L^{(c)}(s,\chi) &= \prod_{p \nmid c} (1-\chi(p)p^{-s})^{-1} \\
    \sigma_{z}^{\chi}(n) &= \sum_{d | n} \chi(d) d^z \\
    G_n(\chi) &= \sum_{\substack{a \pmod{(c,\frac{N}{c})} \\ (a,c,\frac{N}{c}) = 1}} \chi(a) e \left(\frac{a n}{(c,\frac{N}{c})} \right), e(x) = e^{2\pi ix}
  \end{align*}
  For $\re (s'+z) > 0$ and $\re (s'-z) > 0$, the only poles come from the trivial character term.
\end{proposition}

\subsection{Some miscellaneous estimates and bounds}
We will also require the following estimate concerning $L$-functions:
\begin{proposition}
  For $\re s' \geq \half$ and $\theta$ being a bound toward Ramanujan-Petersson conjecture,
  \begin{equation} \label{eq_L_growth}
    \sum_{|t_j| \sim \widetilde{T}} |L_Q(s', \conj{u_j})|^2 e^{-\pi |t_j|} \ll Q^{-2s' + 2\theta} \mathcal{L}^{2+\varepsilon} (1+|s'|+ \widetilde{T})^{2+\varepsilon}
  \end{equation}
\end{proposition}

Together with \eqref{eq_crj_growth} and the following fact
\[ \sum_{|t_j| \sim \widetilde{T}} |\langle U, u_j \rangle|^2 e^{\pi |t_j|} \ll \mathcal{L}^{-2k} \widetilde{T}^{2k} \log t,\]
we have the following proposition:
\begin{proposition} \label{propo_j_sums}
  With the same notations as before,
  \begin{align}
    \sum_{|t_j| \sim \widetilde{T}} L_Q(s', \conj{u_j}) \conj{\langle U, u_j \rangle} &\ll Q^{-s'+\theta} \mathcal{L}^{1-k+\varepsilon} \widetilde{T}^{1+k+\varepsilon} \label{eq_L_inner} \\
    \sum_{|t_j| \sim \widetilde{T}} L_Q(s', \conj{u_j}) c_{r,j} &\ll Q^{-s'+\theta} \mathcal{L}^{1-k+\varepsilon} \widetilde{T}^{\frac{3}{2}+r+\varepsilon} \label{eq_L_crj}
  \end{align}
\end{proposition}
These equations can be proved by Cauchy's inequality with facts quoted before the proposition. 

We will finally note some equivalent facts with Eisenstein series and a very particular functional equation involving Eisenstein series at the $0$-cusp:
\begin{proposition} \label{inner_prod_props}
  The inner product $\langle U, E_\mathfrak{a}(*,s) \rangle$ has the following properties:
  \begin{enumerate}[leftmargin=1.5em]
    \item $\displaystyle \res{s=1} \langle U, E_\mathfrak{a}(*,s) \rangle = \frac{(l_1l_2)^{-\frac{k-1}{2}}}{l_1 \mathcal{V}_{Nl_1}} \langle f,f \rangle$ if $l_1 = l_2$.
    \item $\displaystyle \res{s=1} \langle U, E_\mathfrak{a}(*,s) \rangle = \frac{(l_1l_2)^{-\frac{k-1}{2}}}{l_1l_2 \mathcal{V}_{Nl_1l_2}} \langle f,f \rangle E_{l_1,l_2}(1)$ if $l_1 \neq l_2$.
    \item $\displaystyle [l_1,l_2] \mathcal{V}_{N[l_1,l_2]} \int_{-T}^T \sum_{\mathfrak{a}} \zeta_{\mathfrak{a},Q}(\frac{1}{2},-it) \conj{\langle U, E_\mathfrak{a}(*,\frac{1}{2}+it) \rangle} \,dt \ll Q^{-s'} \mathcal{L}^{1-k+\varepsilon} T^{1+k+\varepsilon}$
  \end{enumerate}
\end{proposition}

\begin{proposition} \label{FE_Eisen_0}
  The Eisenstein series at $0$-cusp has the following functional equation:
  \[ E_0(z,s) = \sum_{\mathfrak{a} = \frac{b}{c}} \frac{\sqrt{\pi} \Gamma(s-\frac{1}{2})}{\Gamma(s)} \frac{\varphi(\frac{N}{c})}{\varphi((c,\frac{N}{c}))} \left( \frac{(c,\frac{N}{c})}{N \cdot \frac{N}{c}} \right)^s \frac{\zeta^{(c)}(2s-1)}{\zeta^{(N)}(2s)} E_\mathfrak{a}(z,1-s) \]
\end{proposition}

\section{Amplifying both aspects} \label{sec:amp}
Our aim here is to understand the growth of $L(\half +it, f_\chi)$ in $t$- and $Q$-aspects. Since it is sufficient to prove the result on eigenforms, so we assume $f$ is an eigenform. We perform our investigation by averaging around $\half + it$ for a small interval as well as applying the amplification technique.

For this, we choose a rapidly decreasing function $V: \mathbb{R} \to \mathbb{R}$ such that its Mellin transform $v(s)$ is meromorphic between $-5 < \re s < 5$. Moreover, $v(s)$ should only have a simple pole at $s=0$ with residue $1$ and exponential decay in $\im s$ as $\im s \to \infty$. An example of this is $v(s) = \frac{1}{5} \Gamma \lt(\frac{s}{5} \rt)$. Specifying $v(s)$ is enough, as:
\begin{equation}
  V(x) = \invmellin{2} v(s) x^{-s} \,d s \label{eq_easy_mellin}
\end{equation}

We start by writing the $L$-function as a rapidly converging series:
\begin{lemma} \label{lemma_approx}
  As $x \to \infty$,
  \begin{equation} \label{eq_approx}
     L\lt( \half + it, f_\chi \rt) = \sum_{n \geq 1} \frac{A(n)\chi(n)}{n^{\half + it}} V \lt( \frac{n}{x} \rt) + O(x^{-\varepsilon}).
  \end{equation}
\end{lemma}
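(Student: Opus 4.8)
The plan is to start from the standard approximate functional equation for $L(s, f_\chi)$ and massage it into the clean truncated series stated in the lemma, absorbing all tail contributions into the error term $O(x^{-\varepsilon})$.

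First I would recall that the completed $L$-function $\Lambda(s, f_\chi) = (Q^2 N / 4\pi^2)^{s/2} \Gamma\bigl(\tfrac{s}{2} + \tfrac{k-1}{4}\bigr)\Gamma\bigl(\tfrac{s}{2} + \tfrac{k+1}{4}\bigr) L(s, f_\chi)$ (with the appropriate normalization for a primitive form of level $N$ twisted by a primitive character mod $Q$) satisfies a functional equation $\Lambda(s, f_\chi) = \epsilon(f_\chi) \Lambda(1-s, \overline{f_\chi})$ with $|\epsilon(f_\chi)| = 1$. The standard contour-shift argument then gives an expression
\[
L\bigl(\tfrac12 + it, f_\chi\bigr) = \sum_{n \geq 1} \frac{A(n)\chi(n)}{n^{1/2+it}}\, W_1\!\left(\frac{n}{\sqrt{Q^2 N}}\right) + \epsilon(f_\chi) \sum_{n \geq 1} \frac{\overline{A(n)}\,\overline{\chi(n)}}{n^{1/2-it}}\, W_2\!\left(\frac{n}{\sqrt{Q^2 N}}\right),
\]
where $W_1, W_2$ are smooth weight functions coming from the gamma factors, rapidly decaying at infinity and essentially $1$ for small argument.

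The key step is then to bridge this to the desired one-sided sum with the author's chosen cutoff $V$ and scale $x$. I would proceed by a second Mellin-inversion: write $V(n/x) = \frac{1}{2\pi i}\int_{(2)} v(s)\, (n/x)^{-s}\, \di s$ using \eqref{eq_easy_mellin}, so that
\[
\sum_{n \geq 1} \frac{A(n)\chi(n)}{n^{1/2+it}} V\!\left(\frac{n}{x}\right) = \frac{1}{2\pi i}\int_{(2)} v(s)\, x^s\, L\bigl(\tfrac12 + it + s, f_\chi\bigr)\, \di s,
\]
valid since the Dirichlet series converges absolutely on $\re s = 2$. Now shift the contour to $\re s = -\varepsilon$. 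The integrand is holomorphic there except for the simple pole of $v(s)$ at $s = 0$ with residue $1$, which by construction contributes exactly $L(\tfrac12 + it, f_\chi)$; note $L(s, f_\chi)$ itself is entire since $f_\chi$ is cuspidal. On the new line $\re s = -\varepsilon$, we bound $L(\tfrac12 - \varepsilon + it + s, f_\chi)$ by the convexity bound (polynomial in $Q$, $|t|$ and $|\im s|$), use the exponential decay of $v(s)$ in $\im s$ to make the integral converge, and pick up the factor $x^{-\varepsilon}$; the resulting contribution is $O(x^{-\varepsilon})$ with an implied constant depending on $f$, $\varepsilon$, and polynomially on $Q(1+|t|)$ — which is harmless since the statement is an asymptotic as $x \to \infty$ with the other parameters fixed. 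This yields exactly \eqref{eq_approx}.

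The main obstacle is bookkeeping rather than conceptual: one must be careful that the statement as written is a genuine identity with a controlled error, and the cleanest route — as sketched above — actually bypasses the two-term approximate functional equation entirely, since the cuspidality of $f_\chi$ makes $L(s, f_\chi)$ entire and the whole content reduces to a single contour shift past the pole of $v$. The only subtlety worth flagging is ensuring $x$ is chosen (as it will be in Section~\ref{sec:amp}) as a small power of $Q(1+|t|)$, so that the polynomial dependence of the error's implied constant on $Q$ and $|t|$ is dominated by the main saving; at the level of this lemma, stated purely in $x$, no such care is needed.
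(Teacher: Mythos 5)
Your proposal is correct and, once you discard the initial detour through the two-term approximate functional equation, it is essentially the paper's own argument: Mellin-invert $V$, identify the truncated sum with $\invmellin{2} L(\half+it+s,f_\chi)\,v(s)\,x^s\,\di s$, and shift the contour left past the simple pole of $v$ at $s=0$, bounding the shifted integral by convexity and the decay of $v$. The only cosmetic difference is that the paper shifts to $\re s = -\half-\varepsilon$ rather than $-\varepsilon$, which changes nothing in the stated error term.
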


\begin{proof}
  Consider the following inverse Mellin transform:
  \begin{equation}
    I_0 := \invmellin{2} L \lt( \half + it + s, f_\chi \rt) v(s) x^s \,d s
  \end{equation}
  On the one hand, since the argument of $L$-function in $I_0$ is in the region of absolute convergence, we have:
  \begin{align*}
    I_0= \invmellin{2} \sum_{n \geq 1} \frac{A(n) \chi(n)}{n^{\half + s + it}} v(s) x^s \,d s = \sum_{n \geq 1} \frac{A(n) \chi(n)}{n^{\half + it}}  V \lt( \frac{n}{x} \rt)
  \end{align*}
  On the other hand, we can move the line of integration to $-\half - \varepsilon$, picking up the only simple pole at $s = 0$ and obtain:
  \begin{equation*}
    I_0 = L\lt(\half + it, f_\chi \rt) + O(x^{-\varepsilon})
  \end{equation*}
  Putting the two equivalent expressions of $I_0$ together proves the lemma.
  
\end{proof}

Our aim here is to get the bound on $L(\half + it, f_\chi)$. To this end, we first amplify the character and obtain:
\begin{equation} \label{eq_most_important}
   |L(\half + it, f_\chi)|^2 |\sum_{l \sim \mathcal{L}} 1|^2 \leq \sum_{\psi (Q)} |L(\half+it,f_\psi)|^2 |\sum_{l \sim \mathcal{L}} \conj{\chi(l)}\psi(l)|^2 \end{equation}
where the first summation runs over all Dirichlet characters $\psi$ mod $Q$ and the $l$-sums are running over primes that are relatively prime to $QN$. The parameter $\mathcal{L}$ is to be chosen optimally later, subject to $\mathcal{L} < Q$.

Next we perform amplification on the $t$-aspect, with modified ideas based upon \cite{Hansen}. The result is the following:
\begin{lemma} \label{lemma_one_use}
\begin{align}
  &|L(\half+it,f_\psi)|^2 |\sum_{l \sim \mathcal{L}} \conj{\chi(l)} \psi(l)|^2 \notag \\
 \ll & \log^{4}(Q(1+|t|)) \!\!\!\!\! \int \limits_{|r| \leq A} \!\!\!\!\! \inttoinf{-\infty} \!\!\!\! |L(\half+i(t+r+r'),f_\psi)|^2 |\sum_{l \sim \mathcal{L}} \conj{\chi(l)} \psi(l) l^{\alpha - ir'} |^2 \,\frac{d r' d r}{\pi (1+\lt( \frac{r'}{G} \rt)^2)} \notag \\
  & \qquad \qquad + O(\log^3 (Q(1+|t|))) + O(|\sum_{l \sim \mathcal{L}} \conj{\chi(l)} \psi(l)|^2), \label{eq_amp_t_single}
\end{align}
where $A := \sqrt{10 \log (Q(1+|t|))}$, $\alpha := \frac{1}{\log (Q(1+|t|))}$ and $G \geq 2A$.
\end{lemma}
\begin{remark}
  In the proof, we will see that the introduction of $G$ into the integral is via the positivity of the integrand. This leads to the desire to minimize $G$ subject to the constraint above.
\end{remark}

\begin{proof}
The proof relies on estimating $L(\sigma + it, f_\psi)$ by averaging $L(\sigma - \alpha + ir, f_\psi)$ over $r$ in a small interval centered around $t$. Each integral expression defined below is essentially illustrating this fact.

First we will show that $L(\half + it, f_\psi)$ is approximable by averaging the $L$-function over a small interval. To this end, consider the following integral:
\begin{equation} \label{eq_I1_defn}
  I_1 := \invmellin{2} L(\half + it + s, f_\psi) \frac{e^{s^2}}{s} \,d s
\end{equation}
On the one hand, $I_1$ is $O(1)$ by bounding the $L$-function by a constant. On the other hand, if we move the line of integration down to $\re s = -\alpha$, then we have:
\[ I_1 = L(\half+it, f_\psi) + \invmellin{-\alpha} L(\half+it+s,f_\psi) \frac{e^{s^2}}{s} \,d  s. \]
When put together with \eqref{eq_I1_defn}, we derive that
\[ L(\half+it,f_\psi) = O(1) + \frac{1}{2\pi} \inttoinf{-\infty} L(\half-\alpha+it+ir, f_\psi) \frac{e^{(-\alpha+ir)^2}}{\alpha-ir} \,d  r. \]
After taking absolute values and squaring both sides, one gets:
\begin{equation} \label{eq_critline}
 | L(\half+it,f_\psi) |^2 \ll \lt( \inttoinf{-\infty} | L(\half-\alpha+it+ir, f_\psi)|  \frac{e^{\alpha^2-r^2}}{\sqrt{\alpha^2 + r^2}} \,d  r \rt)^2 + O(1)
\end{equation}
To continue our investigation, we will split the integral into two parts, $|r| \leq A$ and $|r| > A$, where $A = \sqrt{10 \log (Q(1+|t|))}$.

We start by examining the part of the integral with $|r| > A$, applying convexity for the $L$-function:
\begin{align*}
   \int_{|r| > A} |L(\half-\alpha+it+ir, f_\psi)| \frac{e^{\alpha^2 - r^2}}{\sqrt{\alpha^2 + r^2}} \,d  r &\ll \int_{|r| > A} (Q|t+r|)^{\half+\alpha} \frac{e^{\alpha^2 - r^2}}{r} \,d  r \\
  & \ll (Q(1+|t|))^{\half + \alpha} e^{\alpha^2 - A^2} \ll 1
\end{align*}

For the part $|r| \leq A$, we apply Cauchy's inequality and functional equation:
\begin{align*}
  &\lt( \int_{|r| \leq A} |L(\half-\alpha+it+ir, f_\psi)| \frac{e^{\alpha^2 - r^2}}{\sqrt{\alpha^2 + r^2}} \,d  r \rt)^2 \\
 \leq &\int_{|r| \leq A} |L(\half-\alpha+it+ir, f_\psi)|^2 \,d  r \cdot  \int_{|r| \leq A} \frac{e^{2\alpha^2 - 2r^2}}{\alpha^2 + r^2} \,d  r \\
 \ll &\alpha^{-2} (Q(1+|t|))^{4\alpha} \int_{|r| \leq A} |L(\half+\alpha+it+ir, f_\psi)|^2 \,d  r
\end{align*}

Putting these into \eqref{eq_critline}, recalling $A = \sqrt{10 \log (Q(1+|t|))}$ and $\alpha = \frac{1}{\log (Q(1+|t|))}$, we get
\begin{align}
  |L(\half+it,f_\psi)|^2 &\ll  \log^{2}(Q(1+|t|)) \int_{|r| \leq A} |L(\half+\alpha + i(t+r) ,f_\psi)|^2 \,d  r + O(1) \notag
\end{align}
We will multiply both sides by $|\sum_{l \sim \mathcal{L}} \conj{\chi(l)} \psi(l)|^2$, obtaining:
\begin{align}
   &|L(\half+it,f_\psi)|^2 |\sum_{l \sim \mathcal{L}} \conj{\chi(l)} \psi(l)|^2 \notag \\
  \ll  &\log^{2}(Q(1+|t|)) \!\!\!\! \int \limits_{|r| \leq A} \!\!\!\! |L(\half+\alpha + i(t+r) ,f_\psi)|^2 |\sum_{l \sim \mathcal{L}} \conj{\chi(l)} \psi(l)|^2 \,d  r \!+\! O(|\sum_{l \sim \mathcal{L}} \conj{\chi(l)} \psi(l)|^2) \label{eq_critline_1}
\end{align}

Now, we approximate $L(\half + \alpha + it_2, f_\psi) \sum_{l \sim \mathcal{L}} \psi(l) \conj{\chi(l)}$ by an integral over a small interval on the critical line. To achieve this, we construct the following auxilary integral:
\begin{equation} \label{eq_I2_defn}
  I_2 := \invmellin{2} L(\half + \alpha + it_2 + s, f_\psi) \lt(\sum_{l \sim \mathcal{L}} \conj{\chi(l)} \psi(l) l^{-s} \rt) \frac{e^{s^2}}{s} \,d  s
\end{equation}
Doing the same analysis as before and combining expressions, we obtain:
\begin{align}
  &|L(\half+it,f_\psi)|^2 |\sum_{l \sim \mathcal{L}} \conj{\chi(l)} \psi(l)|^2 \notag \\
 \ll & \log^{4}(Q(1+|t|)) \int \limits_{|r| \leq A} \int \limits_{|r'| \leq A_r} |L(\half+i(t+r)+ir',f_\psi)|^2 |\sum_{l \sim \mathcal{L}} \conj{\chi(l)} \psi(l) l^{\alpha - ir'} |^2 \,d  r' \,d  r \notag \\
 &\qquad \qquad + O(\log^2 (Q(1+|t|)) A) + O(|\sum_{l \sim \mathcal{L}} \conj{\chi(l)} \psi(l)|^2) \label{eq_final_form_1}
\end{align}
where $A_r = \sqrt{10 \log(Q|t+r|)}$. Note that all inequalities are independent of $Q$, $L$ and $t$, as long as $L \ll Q$. Also note that we can make $A_r$ uniform by enlarging the region to $|r'| \leq \sqrt{40 \log(Q(1+|t|))} = 2A$, the inequality still holding due to positivity of the integrand.

Continuing to use the positivity of the integrand, for $G \geq 3A$, the integrand is bounded by:
{
\allowdisplaybreaks
\begin{align}
  & \int_{|r| \leq A} \int_{|r'| \leq 2A} |L(\half+i(t+r)+ir',f_\psi)|^2 |\sum_{l \sim \mathcal{L}} \conj{\chi(l)} \psi(l) l^{\alpha - ir'} |^2 \,d  r' \,d  r \notag \\
 \ll & \int_{|r| \leq G} \! |L(\half+i(t+r),f_\psi)|^2 |\sum_{l \sim \mathcal{L}} \conj{\chi(l)} \psi(l) l^{\alpha - ir} |^2 \,\frac{d r}{\pi (1+\lt( \frac{r}{G} \rt)^2)} \notag \\
 \ll & \int_{|r| \leq A} \inttoinf{-\infty} |L(\half+it+ir,f_\psi)|^2 |\sum_{l \sim \mathcal{L}} \conj{\chi(l)} \psi(l) l^{\alpha - ir} |^2 \,\frac{dr}{\pi (1+\lt( \frac{r}{G} \rt)^2)} \label{eq_final_form_2}
\end{align}
}
Putting this into \eqref{eq_final_form_1} gives the proposition. 
\end{proof}

Putting this proposition together with \eqref{eq_most_important}, we derive the following:
\begin{proposition} \label{propo_amp}
With the same values of $A$, $\alpha$ and the same constraint on $G$ as in lemma \ref{lemma_one_use},
\begin{align}
  &|L(\half + it, f_\chi)|^2 |\sum_{l \sim \mathcal{L}} 1|^2 \notag \\
 \ll & \log^{4}(Q(1+|t|)) \inttoinf{-\infty} \sum_{\psi (Q)} |L(\half+i(t+r),f_\psi)|^2 |\sum_{l \sim \mathcal{L}} \conj{\chi(l)} \psi(l) l^{\alpha - ir} |^2 \,\frac{d r}{\pi (1+\lt( \frac{r}{G} \rt)^2)} \notag \\
 &+ O(Q \log^3 (Q(1+|t|))) + O(Q\mathcal{L}) \label{eq_final_form_3}
\end{align}
\end{proposition}

\begin{proof}
  The only part that requires a proof is the last error term. In particular, we should show that
  \[ \sum_{\psi (Q)} |\sum_{l \sim \mathcal{L}} \conj{\chi(l)} \psi(l)|^2 \ll Q\mathcal{L} \]
  Starting with the left-hand side, we have:
  \begin{align*}
    \sum_{\psi(Q)} |\sum_{l \sim \mathcal{L}} \conj{\chi(l)} \psi(l)|^2 &= \sum_{\psi(Q)} \sum_{l_1, l_2 \sim \mathcal{L}} \conj{\chi(l_1)} \psi(l_1) \chi(l_2) \conj{\psi(l_2)} = \varphi(Q) \sum_{l} 1\ll Q\mathcal{L}
  \end{align*}
  The second equality is obtained by summing over the characters, which implies that $l_1 = l_2$ since $l_1 \equiv l_2 (Q)$ and $\mathcal{L} < Q$.

\end{proof}

Our next immediate goal is to execute the character sum and the $r$-integral in \eqref{eq_final_form_3}. Replacing the $L$-series with \eqref{eq_approx}, up to $O(x^{-\varepsilon})$, one obtains:
\begin{equation} \label{eq_S_defn}
  S := \inttoinf{-\infty} \sum_{\psi (Q)} \left| \sum_{m \geq 1} \sum_{l \sim \mathcal{L}} \frac{A(m)\psi(m)\psi(l) \conj{\chi(l)} l^{\alpha} }{m^{\half+it + ir} l^{ir}} V\lt(\frac{m}{x} \rt) \right|^2 \,\frac{d r}{\pi (1+\lt( \frac{r}{G} \rt)^2)}
\end{equation}
We apply Parseval here to obtain:
{\allowdisplaybreaks
\begin{align*}
  S &= \inttoinf{-\infty} \sum_{\substack{a \text{ mod } Q \\ (a,Q) = 1}} \varphi(Q) \left| \sum_{m \geq 1} \sum_{\substack{l \sim \mathcal{L} \\ l \text{ prime} \\ ml \equiv a (Q)}} \frac{A(m) \conj{\chi(l)} l^{\alpha}}{m^{\half+it + ir} l^{ir}} V\lt(\frac{m}{x} \rt) \right|^2 \,\frac{d r}{\pi (1+\lt( \frac{r}{G} \rt)^2)} \\
    &\leq \inttoinf{-\infty} \sum_{a \text{ mod } Q} \varphi(Q) \left| \sum_{m \geq 1} \sum_{\substack{l \sim \mathcal{L} \\ l \text{ prime} \\ ml \equiv a (Q)}} \frac{A(m) \conj{\chi(l)} l^{\alpha}}{m^{\half+it + ir} l^{ir}} V\lt(\frac{m}{x} \rt) \right|^2 \,\frac{d r}{\pi (1+\lt( \frac{r}{G} \rt)^2)} \\
   &= \varphi(Q) \inttoinf{-\infty} \sum_{m_1, m_2 \geq 1} \sum_{\substack{l_1,l_2 \sim \mathcal{L} \\ l_1,l_2 \text{ prime} \\ m_1l_1 \equiv m_2l_2 (Q)}} \frac{A(m_1)\conj{A(m_2)} \conj{\chi(l_1)}\chi(l_2) (l_1l_2)^{\alpha}}{m_1^{\half+it+ir} m_2^{\half-it-ir} l_1^{ir} l_2^{-ir}} \allowdisplaybreaks[0] \\
  &\phantom{=\varphi(Q)} \qqqquad \qqquad \qquad \times  V\lt(\frac{m_1}{x} \rt)V\lt(\frac{m_2}{x} \rt) \,\frac{d r}{\pi (1+\lt( \frac{r}{G} \rt)^2)} \\
  &=\varphi(Q) G \sum_{l_1,l_2} \conj{\chi(l_1)} \chi(l_2) \sum_{\substack{m_1,m_2 \geq 1 \\ m_1l_1 \equiv m_2l_2 (Q)}} \frac{A(m_1)\conj{A(m_2)}(l_1l_2)^{\alpha}}{m_1^{\half+it} m_2^{\half-it}} \notag \allowdisplaybreaks[0]\\
 &\phantom{=\varphi(Q) \sum_{l_1,l_2} \conj{\chi(l_1)}\chi(l_2)} \qqquad \qqquad \times  V\lt(\frac{m_1}{x} \rt) V\lt(\frac{m_2}{x} \rt) e^{-G \lt| \log \lt(\tfrac{l_1m_1}{l_2m_2} \rt) \rt|} \\
  &=: S_d + S_{o_1} + S_{o_2}
\end{align*}
}%
where the diagonal portion $S_d$ and the off-diagonal portions, $S_{o_1}$ and $S_{o_2}$, are defined as follows:
{\allowdisplaybreaks
\begin{align}
  S_d :&= \varphi(Q) G \sum_{l_1,l_2} \conj{\chi(l_1)} \chi(l_2) \sum_{\substack{m_1,m_2 \geq 1 \\ m_1l_1 = m_2l_2}} \frac{A(m_1)\conj{A(m_2)}(l_1l_2)^{\alpha}}{m_1^{\half+it} m_2^{\half-it}} V\lt(\frac{m_1}{x} \rt)V\lt(\frac{m_2}{x} \rt) \label{eq_diag_defn} \\
  S_{o_1} :&= \varphi(Q) G \sum_{l_1,l_2} \conj{\chi(l_1)} \chi(l_2) \sum_{\substack{h,m_2 \geq 1 \\ m_1l_1 = m_2l_2 + hQ}} \frac{A(m_1)\conj{A(m_2)}(l_1l_2)^{\alpha}}{m_1^{\half+it} m_2^{\half-it}} \notag  \allowdisplaybreaks[0] \\
 &\phantom{=\varphi(Q) \sum_{l_1,l_2} \conj{\chi(l_1)}\chi(l_2)} \qqquad \qquad \times  V\lt(\frac{m_1}{x} \rt)V\lt(\frac{m_2}{x} \rt) e^{-G \lt| \log \lt(\tfrac{l_1m_1}{l_2m_2} \rt) \rt|} \label{eq_o1_defn} \\
  S_{o_2} :&= \varphi(Q) G \sum_{l_1,l_2} \conj{\chi(l_1)} \chi(l_2) \sum_{\substack{h,m_1 \geq 1 \\ m_2l_2 = m_1l_1 + hQ}} \frac{A(m_1)\conj{A(m_2)}(l_1l_2)^{\alpha}}{m_1^{\half+it} m_2^{\half-it}} \notag \allowdisplaybreaks[0] \\ 
 &\phantom{=\varphi(Q) \sum_{l_1,l_2} \conj{\chi(l_1)}\chi(l_2)} \qqquad \qquad  \times  V\lt(\frac{m_1}{x} \rt)V\lt(\frac{m_2}{x} \rt) e^{-G \lt| \log \lt(\tfrac{l_1m_1}{l_2m_2} \rt) \rt|} \label{eq_o2_defn}
\end{align}
}%

\begin{remark}
   $S_{o_2} = \conj{S_{o_1}}$.
\end{remark}
At this point, we have converted the problem into studying $S_d$ and $S_{o_1}$.

\section{The diagonal portion $S_d$} \label{sec:diag}
In this section, we focus on analyzing $S_d$. The analysis breaks $S_d$ into two sums, $S_{d_1}$ corresponding to $l_1 = l_2$ and $S_{d_2}$ corresponding to $l_1 \neq l_2$:
\[ S_d = S_{d_1} + S_{d_2}, \]
where
\begin{align}
  S_{d_1} :&= \varphi(Q) G \sum_{l} l^{2\alpha} \sum_{m \geq 1} \frac{|A(m)|^2}{m} V\lt(\frac{m}{x} \rt)V\lt(\frac{m}{x} \rt) \label{eq_S_d1} \\
  S_{d_2} :&= \varphi(Q) G \sum_{\substack{l_1,l_2 \\ l_1 \neq l_2}} \conj{\chi(l_1)} \chi(l_2) \sum_{\substack{m_1,m_2 \geq 1 \\ m_1l_1 = m_2l_2}} \frac{A(m_1)\conj{A(m_2)}(l_1l_2)^{\alpha}}{m_1^{\half+it} m_2^{\half-it}} V\lt(\frac{m_1}{x} \rt)V\lt(\frac{m_2}{x} \rt) \label{eq_d2_defn}
\end{align}

\subsection{The case of $l_1 = l_2$} \label{sec:diag_ez}
For $S_{d_1}$, note that the $m$-sum does not depend on $l$. The contribution of $S_{d_1}$ is as follows:
\begin{proposition} \label{eq_S_d1_prop}
  As $x \to \infty$,
  \begin{equation}
     S_{d_1} = \varphi(Q) G \lt| \sum_l l^{2\alpha} \rt| \frac{(4\pi)^k}{\Gamma(k)}\langle f,f\rangle \log x + O(QG\mathcal{L}^{1 + 2\alpha}) + O(x^{-\varepsilon})
  \end{equation}
\end{proposition}

\begin{proof}
  Applying \eqref{eq_easy_mellin} twice to \eqref{eq_S_d1}, we obtain:
\begin{align}
  S_{d_1} &= \varphi(Q) G \lt| \sum_l l^{2\alpha} \rt| \dblinvmellin{2}{2} \sum_{m \geq 1} \frac{|A(m)|^2}{m^{1+s+w}} x^{s+w} v(s) v(w) \,d  s \,d  w \notag \\
 &= \varphi(Q) G \lt| \sum_l l^{2\alpha} \rt| \dblinvmellin{2}{4} \sum_{m \geq 1} \frac{|A(m)|^2}{m^{1+s}} x^s v(s-w)v(w) \,d  s \,d  w \label{eq_S_d1_int}
\end{align}

Moving the line of integration of $s$ down to $\re s = -\frac{1}{3} - \varepsilon$, we pick up simple poles at $s = 0, s = w$, obtaining:
\begin{equation} \label{eq_S_d1_break}
   S_{d_1} = \res{s=w} S_{d_1} + \res{s=0} S_{d_1} + O(x^{-\varepsilon})
\end{equation}
For the residue at $s = w$, by moving the line of integration $\re w = 2$ down to $-\frac{1}{3}- \varepsilon$, we have:
\begin{align}
   S_{d_1, \res{s=w}} &=  \varphi(Q) G \left| \sum_l l^{2\alpha} \right| \left( \frac{(4\pi)^k}{\Gamma(k)}\langle f,f\rangle \log x + O(1) + O(x^{-\frac{1}{3} - \varepsilon}) \right). \label{eq_S_d1_sw}
\end{align}

We continue with the residue at $s=0$:
\begin{equation} \label{eq_S_d1_s0}
  S_{d_1, \res{s=0}} = \varphi(Q) G \left| \sum_l l^{2\alpha} \right| \frac{(4\pi)^k}{\Gamma(k)} \langle f,f\rangle \left( \invmellin{2} v(-w)v(w) \,d  w \right) 
\end{equation}
This is just $O(QG\mathcal{L}^{1 + 2\alpha})$, upon noting the $l$-sum is $O(\mathcal{L}^{1+2\alpha})$ and the $w$-integral above is a constant.

Now plugging \eqref{eq_S_d1_sw} and \eqref{eq_S_d1_s0} into \eqref{eq_S_d1_break}, we have the proposition once we note that the $l$-sum is $O(\mathcal{L}^{1+2\alpha})$. 
\end{proof}

\subsection{The case of $l_1 \neq l_2$} \label{sec:diag_nez}
In $S_{d_2}$, we have the condition $m_1l_1 = m_2l_2$. When $l_1 \neq l_2$, it implies $m_1 = l_2 m$, $m_2 = l_1m$ for some positive integer $m$. Hence, we have:
\begin{equation} \label{eq_S_d2}
  S_{d_2} = \varphi(Q) G \sum_{\substack{l_1,l_2 \\ l_1 \neq l_2}} \frac{\conj{\chi(l_1)}\chi(l_2)}{l_2^{\half+it} l_1^{\half-it}} (l_1l_2)^{\alpha} \sum_{m \geq 1} \frac{A(l_2m)\conj{A(l_1m)}}{m} V\lt(\frac{l_2m}{x} \rt)V\lt(\frac{l_1m}{x} \rt) 
\end{equation}
Using the same methods to the proof of proposition \ref{eq_S_d1_prop}, we have the following:
\begin{proposition} \label{eq_S_d2_propo}
  As $x \to \infty$,
  \begin{align}
    S_{d_2} = \varphi(Q) G \sum_{\substack{l_1,l_2\\ l_1 \neq l_2}} \frac{\conj{\chi(l_1)}\chi(l_2)}{l_2^{\half+it} l_1^{\half-it}} (l_1l_2)^{\alpha} \frac{(4\pi)^k}{\Gamma(k)}\langle f,f \rangle &E_{l_1,l_2}(1) \log (x/l_2) \notag \\
  &+ O(QG\mathcal{L}^{1+2\alpha+\varepsilon}) + O(x^{-\varepsilon}),
  \end{align}
  where $E_{l_1,l_2}(s)$ is defined as follows:
  \[ E_{l_1,l_2}(s) := \lt( \sum_{m \geq 1} \frac{A(l_2m)\conj{A(l_1m)}}{m^s} \rt) \lt( \sum_{m \geq 1} \frac{|A(m)|^2}{m^s} \rt)^{-1}, \]
  which is essentially a product of ratios of Euler factors at the primes $l_1$ and $l_2$. $E_{l_1,l_2}(s)$ is analytic for $\re s > 0$ and is bounded independent of $l_1,l_2$ in the region.
\end{proposition}

We have now obtained a complete understanding of the diagonal sum, so our next focus is to understand the off-diagonal sums $S_{o_1}$ and $S_{o_2}$. By the remark after \eqref{eq_o2_defn} , it is sufficient for us to understand $S_{o_1}$.

\section{Off-diagonal portion $S_{o_1}$, setting up the integrals} \label{sec:off-d-setup}
Recall from \eqref{eq_o1_defn},
\begin{align}
  S_{o_1} &= \varphi(Q) G \sum_{l_1,l_2} \conj{\chi(l_1)} \chi(l_2) \sum_{\substack{h,m_2 \geq 1 \\ m_1l_1 = m_2l_2 + hQ}} \frac{A(m_1)\conj{A(m_2)}(l_1l_2)^{\alpha}}{m_1^{\half+it} m_2^{\half-it}} \notag  \allowdisplaybreaks[0] \\
 &\phantom{=\varphi(Q) \sum_{l_1,l_2} \conj{\chi(l_1)}\chi(l_2)} \qqquad \qquad \times  V\lt(\frac{m_1}{x} \rt)V\lt(\frac{m_2}{x} \rt) e^{-G \log \lt(\tfrac{l_1m_1}{l_2m_2} \rt)} \label{eq_S_o1_sum}
\end{align}

We will show that this object can be converted into studying a four-fold integral involving the $Z_Q(s,w)$ function:
\begin{proposition} \label{eq_S_o1_int}
As $G \to \infty$, we have
\begin{align}
  S_{o_1} &= \varphi(Q) G \sum_{l_1,l_2} \conj{\chi(l_1)}\chi(l_2) (l_1l_2)^{\half+\alpha} l_1^{it}l_2^{-it} \notag \\ 
  &\quad \times \quadinvmellin{\gamma_1}{\gamma_2}{\gamma_3}{\gamma_4} Z_Q\lt(s, s'-s-\tfrac{k}{2} + 1\rt) \notag \\
  &\phantom{\invmellin{c_1}} \times \betaF{s' - s + \half - \beta}{w+s+\beta + \frac{k}{2}-1-s'+it}{w+\frac{k-1}{2}+it} l_1^{w-\half} l_2^{s'-w} \notag\\
  &\phantom{\invmellin{c_1}} \times x^{s' - \half} v(s'-w)v(w-\half) \frac{\Gamma(\beta)\Gamma(G-\beta)}{\Gamma(G)} \,d  \beta \,d  s' \,d  s \,d  w
\end{align}
where $\re w = \gamma_1 = 1 + 2\theta + 2\varepsilon$, $\re s = \gamma_2 = \frac{3}{4}$, $\re s' = \gamma_3 = \frac{5}{4}$ and $\re \beta = \gamma_4 = \frac{1}{2}+\theta+\varepsilon$.
\end{proposition}

\begin{remark}
  As long as $G$ is chosen such that $G = (1+|t|)^{a} \log^b(Q)$ with $a > 0$ and $b > 4$, then $G \geq 3A = \sqrt{90 \log (Q(1+|t|))}$ for large $Q$ and $t$.
\end{remark}

\begin{proof}
We will focus on the innermost sum of $S_{o_1}$. For convenience, we define:
\[ T_{o_1} := \sum_{\substack{h_0,m_2 \geq 1\\ m_1l_1 = m_2l_2 + h_0Q}} \!\!\!\!\!\! \frac{A(m_1)\conj{A(m_2)}}{m_1^{\half+it}m_2^{\half-it}} V\lt(\frac{m_1}{x} \rt)V\lt(\frac{m_2}{x} \rt) e^{- G \log \lt( \frac{m_1l_1}{m_2l_2} \rt)} \]
Starting with the definition of $T_{o_1}$, we will substitute in $m_1l_1 = m_2l_2 + h_0Q$ in several places:
\[ T_{o_1} = l_1^{\half+it} l_2^{\half-it} \sum_{m_2,h_0} \frac{A(m_1)\conj{A(m_2)}V\lt(\frac{m_1}{x} \rt)V\lt(\frac{m_2}{x} \rt)} {(m_2l_2+h_0Q)^{\half+it}(m_2l_2)^{\half-it}} e^{- G \log \lt( 1 + \frac{h_0Q}{m_2l_2} \rt)} \]

We apply \eqref{eq_easy_mellin} twice, resulting in:
\begin{align}
   T_{o_1} = l_1^{\half+it} l_2^{\half-it} &\dblinvmellin{c_1}{c_2} \! \sum_{m_2,h_0} \!\! \frac{A(m_1) \conj{A(m_2)}}{(m_2l_2+h_0Q)^{\half+w+it} (m_2l_2)^{\half + s-it}} \notag \\
  &\qquad \quad \times v(s)v(w)  x^{s+w} l_2^s l_1^w \lt( 1 + \frac{h_0Q}{m_2l_2} \rt)^{-G}  \,d  s \,d  w  \notag
\end{align}
We quote this identity from \cite{GradsteynRhyzik}:
\begin{equation}
  \invmellin{\gamma} \betaF{u}{\beta-u}{\beta} t^{-u} \,d  u = (1+t)^{-\beta}, \label{eq_neg_bino}
\end{equation}
where $0 < \gamma < \re \beta$.
Manipulating the expression from before and using \eqref{eq_neg_bino}, we end up introducing the $Z_Q$ function defined above:
{
\allowdisplaybreaks
\begin{align*}
  T_{o_1} =  &l_1^{\half+it} l_2^{\half-it} \triinvmellin{c_1}{c_2}{c_3} \! \sum_{m_2,h_0} \!\! \frac{A(m_1) \conj{A(m_2)} \left(1 + \frac{h_0Q}{m_2l_2}\right)^{\frac{k-1}{2}}}{(m_2l_2)^{s+w+1}}  x^{s+w}\allowdisplaybreaks[0] \\
 & \quad \times \left(1+\frac{h_0Q}{m_2l_2}\right)^{-(w+\frac{k}{2}+it)} \!\! v(s)v(w) l_2^s l_1^w \lt( \frac{h_0Q}{ m_2l_2} \rt)^{-\beta} \!\! \frac{\Gamma(\beta)\Gamma(G-\beta)}{\Gamma(G)} \,d  s \,d  w \,d  \beta \\
 = &l_1^{\half+it} l_2^{\half-it} \! \quadinvmellin{c_1}{c_2}{c_3}{c_4} \!\!\!\! Z_Q\lt(s+w+1-u-\beta, u+\beta-\tfrac{k-1}{2}\rt)  \allowdisplaybreaks[0] \\
 & \times \betaF{u}{w+\frac{k}{2}-u+it}{w+\frac{k}{2}+it} v(s)v(w)x^{s+w} l_2^s l_1^w \frac{\Gamma(\beta)\Gamma(G-\beta)}{\Gamma(G)} \,d  u \,d  s \,d  w \,d  \beta
\end{align*}
}%
From here, we will do a series of change of variables. First we change $u \mapsto u  - \beta$:
\begin{align*}
=&l_1^{\half+it} l_2^{\half-it} \quadinvmellin{c'_1}{c'_2}{c'_3}{c'_4} Z_Q(s+w+1-u, u-\tfrac{k-1}{2}) x^{s+w} l_2^s l_1^w  \\
 &\times \betaF{u-\beta}{w+\frac{k}{2}+\beta- u+it}{w+\frac{k}{2}+it} v(s)v(w) \frac{\Gamma(\beta)\Gamma(G-\beta)}{\Gamma(G)} \,d  u \,d  s \,d  w \,d  \beta
\end{align*}
Now we do $s \mapsto s+u-1$:
\begin{align*}
=&l_1^{\half+it} l_2^{\half-it} \! \quadinvmellin{c''_1}{c''_2}{c''_3}{c''_4} \!\!\! Z_Q(s\!+\!w, u-\tfrac{k-1}{2}) x^{s-1+u+w} l_2^{s+u-1} l_1^w  \\
 &\times \betaF{u-\beta}{w+\frac{k}{2}+\beta - u+it}{w+\frac{k}{2}+it}v(s+u-1) v( w) \frac{\Gamma(\beta)\Gamma(G-\beta)}{\Gamma(G)} \,d  u \,d  s \,d  w \,d  \beta
\end{align*}
Then we will change $s \mapsto s-w, u \mapsto u+\frac{k-1}{2}$:
\begin{align*}
 =&l_1^{\half+it} l_2^{\half-it} \! \quadinvmellin{c'''_1}{c'''_2}{c'''_3}{c'''_4} \!\! Z_Q(s, u) x^{s+u+\frac{k-3}{2}} l_2^{s-w+u+\frac{k-3}{2}} l_1^w\\ 
&\times \betaF{u+\frac{k-1}{2}-\beta}{w-u+\half+\beta+it}{w+\frac{k}{2}+it}v(s-w+u+\frac{k-1}{2}-1) v( w) \\
& \times \frac{\Gamma(\beta)\Gamma(G-\beta)}{\Gamma(G)} \,d  u \,d  s \,d  w \,d  \beta
\end{align*}
Finally we will get rid of $u$ and introduce $s' = s + u + \frac{k}{2} - 1$, while also doing $w \mapsto w-\half$:
\begin{align*}
  =&l_1^{\half+it} l_2^{\half-it} \! \quadinvmellin{\gamma_1}{\gamma_2}{\gamma_3}{\gamma_4} \!\!\! Z_Q\lt(s, s'-s-\tfrac{k}{2} + 1\rt) x^{s' - \half}   l_2^{s'-w} l_1^{w-\half}  \\
  &\qqquad \qqquad \qquad \times \betaF{s' - s + \half -\beta}{s + w+\tfrac{k}{2} - 1 - s' +\beta+it}{w+\tfrac{k-1}{2}+it}\\
  &\qqquad \qqquad \qquad \times v(s'-w) v(w-\half) \frac{\Gamma(\beta)\Gamma(G-\beta)}{\Gamma(G)} \,d  \beta \,d  s' \,d  s \,d  w
\end{align*}
This ends the change of variables. We can take the following values for the $\gamma_i$'s: $\re s = 2$, $\re s' = 2 + \frac{k}{2} + \varepsilon$, $\re w = 1 + 2\theta + 2\varepsilon$ and $\re \beta  = 2$.

We can move $\re \beta$ down to $\frac{3}{4}$ without hitting poles. Next, we can move $\re s'$ to $\frac{5}{2} + \varepsilon$ without hitting poles. Now we can move $\re s$ down to $\frac{3}{4}$ without picking up poles. Then we will move $\re s'$ to $\frac{5}{4}$, again without poles. Finally, moving $\re \beta$ to $\frac{1}{2}+\theta+\varepsilon$ does not hit any pole. This proves the proposition. 
\end{proof}

We will separate our analysis of $S_{o_1}$ into 2 parts, the first part corresponding to the discrete spectrum $S_{o_1}^d$ and the second to the continuous spectrum $S_{o_1}^c$. We will also replace $Z_Q(s,s'-s-\tfrac{k}{2} + 1)$ with $\lim_{\delta \to 0} Z_Q(s,s'-s-\tfrac{k}{2} + 1; \delta)$, taking the $\delta$ limit where it is convenient to do so. In our analysis, our top priority is to bring down the effects of $x$; the second priority is to bring down the $t$-contribution. 

\section{Off-diagonal, discrete spectrum}
We will be looking at the growth of the discrete terms in this section:
\begin{align}
  S_{o_1}^d = & \lim_{\delta \to 0}\, \varphi(Q) G \sum_{l_1,l_2} \conj{\chi(l_1)}\chi(l_2) (l_1l_2)^{\half+\alpha} l_1^{it}l_2^{-it} \notag \\
  &\times \quadinvmellin{\gamma_1}{\gamma_2}{\gamma_3}{\gamma_4} \frac{(4\pi)^k 2^{s-\half} (l_1l_2)^{\frac{k-1}{2}}}{\Gamma(s+k-1) 2\sqrt{\pi}}  x^{s'-\half} l_2^{s'-w} l_1^{w-\half}  \notag \\
  &\times  v( s'-w) v( w-\half) \betaF{s' - s + \half -\beta}{s + w+\beta - s' + \tfrac{k}{2} - 1+it}{w+ \tfrac{k-1}{2}+it}  \notag \\
  &\times\sum_{t_j} L_Q(s',\conj{u_j}) M(s,t_j,\delta) \conj{\langle U, u_j \rangle} \frac{\Gamma(\beta)\Gamma(G-\beta)}{\Gamma(G)} \,d  \beta \,d  s' \,d  s \,d  w \label{eq_S_o1d_5int}
\end{align}

Our goal here is twofold: to bring down the $x$-exponent as much as possible and to take the $\delta$-limit. As such, the natural thing to do is to bring $\re s'$. Hence, we move $\re s'$ down to $\re s' = \frac{1}{2} - \varepsilon$, hitting simple poles at $s' = w$ and $s' = s + \beta - \frac{1}{2}$. We break our analysis into bounding these two residues and the moved integral.

\subsection{The residue at $s'=w$}
In this section, we will show the following:
\begin{proposition} \label{eq_S_o1d_Zres_propo}
For $G \asymp (1+|t|)^{\frac{2}{3-2\theta}} \log^5 Q$ and $\mathcal{L} \ll Q$,
  \[ \res{s'=w} S_{o_1}^d \ll G^{1+\varepsilon} Q^{\half+\theta+\varepsilon} \mathcal{L}^{3+2\alpha+\varepsilon} + x^{-\varepsilon}, \]
\end{proposition}

\begin{remark}
In the proof of this proposition, we also justify the reason of having this condition on $G$ .
\end{remark}

\begin{proof}
Taking the residue at $s'=w$, we have:
\begin{align}
  \res{s'=w} S_{o_1}^d = & \lim_{\delta \to 0}\, \varphi(Q) G \sum_{l_1,l_2} \conj{\chi(l_1)}\chi(l_2) (l_1l_2)^{\half+\alpha} l_1^{it}l_2^{-it} \notag \\
  &\times \triinvmellin{\gamma_1}{\gamma_2}{\gamma_4} \frac{(4\pi)^k 2^{s-\half} (l_1l_2)^{\frac{k-1}{2}}}{\Gamma(s+k-1) 2\sqrt{\pi}}  (xl_1)^{w-\half} \notag \\
  &\times  v( w-\half) \betaF{w- s + \half -\beta}{s + \beta + \tfrac{k}{2} - 1+it}{w+ \tfrac{k-1}{2}+it}  \notag \\
  &\times\sum_{t_j} L_Q(w,\conj{u_j}) M(s,t_j,\delta) \conj{\langle U, u_j \rangle} \frac{\Gamma(\beta)\Gamma(G-\beta)}{\Gamma(G)} \,d  \beta \,d  s \,d  w \label{eq_S_o1d_leadsp}
\end{align}
This really suggests moving $\re w$ to $\frac{1}{2}-\varepsilon$, which picks up simple poles at $w=s+\beta-\frac{1}{2}$ and $w=\frac{1}{2}$.

\subsubsection{The residue at $w=s+\beta-\frac{1}{2}$}
We first write down the residue:
\begin{align}
  &\res{w=s+\beta-\frac{1}{2}}\res{s'=w} S_{o_1}^d \notag \\
  = & \lim_{\delta \to 0}\, \varphi(Q) G \sum_{l_1,l_2} \conj{\chi(l_1)}\chi(l_2) (l_1l_2)^{\half+\alpha} l_1^{it}l_2^{-it} \notag \\
  &\times \dblinvmellin{\gamma_2}{\gamma_4} \frac{(4\pi)^k 2^{s-\half} (l_1l_2)^{\frac{k-1}{2}}}{\Gamma(s+k-1) 2\sqrt{\pi}}  (xl_1)^{s+\beta-1} v( s+\beta-1)  \notag \\
  &\times\sum_{t_j} L_Q(s+\beta-\frac{1}{2},\conj{u_j}) M(s,t_j,\delta) \conj{\langle U, u_j \rangle} \frac{\Gamma(\beta)\Gamma(G-\beta)}{\Gamma(G)} \,d  \beta \,d  s \label{eq_S_o1d_leadswp}
\end{align}
Here, we move $\re \beta$ down to $\frac{1}{4}-\varepsilon$, passing through a simple pole at $\beta = 1-s$. The residue is:
\begin{align}
  &\res{\beta=1-s} \res{w=s+\beta-\frac{1}{2}}\res{s'=w} S_{o_1}^d \notag \\
  = & \lim_{\delta \to 0}\, \varphi(Q) G \sum_{l_1,l_2} \conj{\chi(l_1)}\chi(l_2) (l_1l_2)^{\half+\alpha} l_1^{it}l_2^{-it} \sum_{t_j} L_Q(\frac{1}{2},\conj{u_j})  \conj{\langle U, u_j \rangle} \notag \\
  &\times \invmellin{\gamma_2} \frac{(4\pi)^k 2^{s-\half} (l_1l_2)^{\frac{k-1}{2}}}{\Gamma(s+k-1) 2\sqrt{\pi}} M(s,t_j,\delta) \frac{\Gamma(1-s)\Gamma(G-1+s)}{\Gamma(G)} \,ds \label{eq_S_o1d_lead}
\end{align}
Now to take the limit as $\delta \to 0$, it is necessary for us to move $\re s$ to $\frac{1}{2}-\frac{k}{2}-\theta -\varepsilon$, which results in us picking up poles at $s = \frac{1}{2} \pm it_j - r$, for integers $0 \leq r \leq \frac{k}{2}$. For integer $r$, using the notation from proposition \ref{eq_Z_s_poles}  the residue is:
\begin{align}
  &\sum_{\pm t_j} \res{s=\frac{1}{2}+it_j-r} \res{\beta=1-s} \res{w=s+\beta-\frac{1}{2}}\res{s'=w} S_{o_1}^d \notag \\
  = & \varphi(Q) G \sum_{l_1,l_2} \conj{\chi(l_1)}\chi(l_2) (l_1l_2)^{\half+\alpha} l_1^{it}l_2^{-it} \notag \\
  &\qqquad \times \sum_{t_j} (l_1l_2)^{\frac{k-1}{2}} c_{r,j} L_Q(\frac{1}{2},\conj{u_j}) \frac{\Gamma(\frac{1}{2}-it_j+r)\Gamma(G-\frac{1}{2}-it_j+r)}{\Gamma(G)} \label{eq_S_o1d_end1}
\end{align}
Using the fact that $\Gamma(G-\frac{1}{2}-it_j-r) \ll \Gamma(G-\frac{1}{2}+\theta-r)$ and \eqref{eq_L_crj}, we can see that the above expression is bounded by $O(Q^{\frac{1}{2}+\theta+\varepsilon} G^{\frac{1}{2}+\theta-r+\varepsilon} \mathcal{L}^{3+2\alpha +\varepsilon})$.

For the moved integral with $\re s = \gamma_2' = \frac{1}{2}-\frac{k}{2}-\theta-\varepsilon$, we take $\delta \to 0$, obtaining:
\begin{align}
  &\res{\beta=1-s} \res{w=s+\beta-\frac{1}{2}}\res{s'=w} S_{o_1}^d \notag \\
  = & \varphi(Q) G \sum_{l_1,l_2} \conj{\chi(l_1)}\chi(l_2) (l_1l_2)^{\half+\alpha} l_1^{it}l_2^{-it} \invmellin{\gamma_2'} \frac{(4\pi)^k (l_1l_2)^{\frac{k-1}{2}}}{2\Gamma(s+k-1)} \notag \\
  &\quad \times \sum_{t_j} L_Q(\frac{1}{2},\conj{u_j}) \frac{\Gamma(1-s)^2\Gamma(s-\frac{1}{2}+it_j)\Gamma(s-\frac{1}{2}-it_j)}{\Gamma(\frac{1}{2}+it_j)\Gamma(\frac{1}{2}-it_j)} \conj{\langle U, u_j \rangle} \frac{\Gamma(G-1+s)}{\Gamma(G)} \,d  s \label{eq_S_o1d_lead_moved}
\end{align}
By using Stirling's approximation, \eqref{eq_L_inner} and splitting the integral according to the relative sizes of $|t_j|$ and $|\im s|$, one can derive the bound $O(Q^{\frac{1}{2}+\theta+\varepsilon} G^{\frac{1}{2}-\frac{k}{2}-\theta-\varepsilon} \mathcal{L}^{3+2\alpha +\varepsilon})$.

\subsubsection{Residue at $w=\frac{1}{2}$}
The residue is:
\begin{align}
  &\res{w=\frac{1}{2}}\res{s'=w} S_{o_1}^d \notag \\
  = & \lim_{\delta \to 0}\, \varphi(Q) G \sum_{l_1,l_2} \conj{\chi(l_1)}\chi(l_2) (l_1l_2)^{\half+\alpha} l_1^{it}l_2^{-it} \notag \\
  &\times \dblinvmellin{\gamma_2}{\gamma_4} \frac{(4\pi)^k 2^{s-\half} (l_1l_2)^{\frac{k-1}{2}}}{\Gamma(s+k-1) 2\sqrt{\pi}} \sum_{t_j} L_Q(\frac{1}{2},\conj{u_j}) M(s,t_j,\delta) \conj{\langle U, u_j \rangle} \notag \\
  &\times  \betaF{1- s -\beta}{s + \beta + \tfrac{k}{2} - 1+it}{\tfrac{k}{2}+it} \frac{\Gamma(\beta)\Gamma(G-\beta)}{\Gamma(G)} \,d  \beta \,d  s \label{eq_S_o1d_2ndw}
\end{align}
Similar as before, we need to move $\re s$ to $\frac{1}{2}-\frac{k}{2}-\theta-\varepsilon$ to take the limit. In doing so, we pick up simple poles at $s = 1-\beta$, $s=1-\beta-\frac{k}{2}-it$ and $s=\frac{1}{2}\pm it_j-r$ for $0 \leq r \leq \frac{k}{2}$.

Investigating the pole at $s=1-\beta$, the residue is:
\begin{align}
  &\res{s=1-\beta} \res{w=\frac{1}{2}}\res{s'=w} S_{o_1}^d \notag \\
  = &- \lim_{\delta \to 0}\, \varphi(Q) G \sum_{l_1,l_2} \conj{\chi(l_1)}\chi(l_2) (l_1l_2)^{\half+\alpha} l_1^{it}l_2^{-it} \sum_{t_j} L_Q(\frac{1}{2},\conj{u_j}) \conj{\langle U, u_j \rangle} \notag \\
  &\times \invmellin{\gamma_4} \frac{(4\pi)^k 2^{\half-\beta} (l_1l_2)^{\frac{k-1}{2}}}{\Gamma(k-\beta) 2\sqrt{\pi}} M(1-\beta,t_j,\delta) \frac{\Gamma(\beta)\Gamma(G-\beta)}{\Gamma(G)} \,d\beta \label{eq_S_o1d_2ndws}
\end{align}
Changing variable $\beta = 1-s$, we see that the contour line becomes $\re s = \frac{1}{2}-\theta-\varepsilon$. This is pretty much the same term as \eqref{eq_S_o1d_lead}. Thus, the same bound applies.

Next up is the residue at $s=1-\beta-\frac{k}{2}-it$, for which we also take $\delta \to 0$:
\begin{align}
  &\res{s=1-\beta-\frac{k}{2}-it} \res{w=\frac{1}{2}}\res{s'=w} S_{o_1}^d \notag \\
  = & \varphi(Q) G \sum_{l_1,l_2} \conj{\chi(l_1)}\chi(l_2) (l_1l_2)^{\half+\alpha} l_1^{it}l_2^{-it} \notag \\
  &\qqquad \times \invmellin{\gamma_4} \frac{(4\pi)^k(l_1l_2)^{\frac{k-1}{2}}}{2\Gamma(\frac{k}{2}-\beta-it) }  \frac{\Gamma(\beta)\Gamma(G-\beta)}{\Gamma(G)} \sum_{t_j} L_Q(\frac{1}{2},\conj{u_j})\conj{\langle U, u_j \rangle}\notag \\
  &\qqquad \times  \frac{\Gamma(\beta+\frac{k}{2}+it)\Gamma(\frac{1}{2}-\beta-\frac{k}{2}-it+it_j)\Gamma(\frac{1}{2}-\beta-\frac{k}{2}-it-it_j)}{\Gamma(\frac{1}{2}+it_j)\Gamma(\frac{1}{2}-it_j)} \,d  \beta \label{eq_S_o1d_2ndwLs}
\end{align}
Changing variable $\beta \mapsto \beta - it$, the expression becomes:
\begin{align}
  &\res{s=1-\beta-\frac{k}{2}-it} \res{w=\frac{1}{2}}\res{s'=w} S_{o_1}^d \notag \\
  = & \varphi(Q) G \sum_{l_1,l_2} \conj{\chi(l_1)}\chi(l_2) (l_1l_2)^{\half+\alpha} l_1^{it}l_2^{-it} \notag \\
  &\qqquad \times \invmellin{\gamma_4} \frac{(4\pi)^k(l_1l_2)^{\frac{k-1}{2}}}{2\Gamma(\frac{k}{2}-\beta-it) }  \frac{\Gamma(\beta-it)\Gamma(G-\beta+it)}{\Gamma(G)} \sum_{t_j} L_Q(\frac{1}{2},\conj{u_j})\conj{\langle U, u_j \rangle}\notag \\
  &\qqquad \times  \frac{\Gamma(\beta+\frac{k}{2})\Gamma(\frac{1}{2}-\beta-\frac{k}{2}+it_j)\Gamma(\frac{1}{2}-\beta-\frac{k}{2}-it_j)}{\Gamma(\frac{1}{2}+it_j)\Gamma(\frac{1}{2}-it_j)} \,d  \beta \label{eq_S_o1d_2ndwLs_break}
\end{align}
Note that there is enough exponential decay in $\beta$ even if we sum $t_j$ first. Hence, we can easily see that this term is bounded by $O(G^{\frac{1}{2}+\varepsilon} Q^{\frac{1}{2}+\theta+\varepsilon} \mathcal{L}^{3+2\alpha+\varepsilon})$.

We now keep track of the residues at $s=\frac{1}{2}+it_j-r$:
\begin{align}
  &\sum_{\pm t_j} \res{s=\frac{1}{2}+it_j-r} \res{w=\frac{1}{2}}\res{s'=w} S_{o_1}^d \notag \\
  = & \varphi(Q) G \sum_{l_1,l_2} \conj{\chi(l_1)}\chi(l_2) (l_1l_2)^{\half+\alpha} l_1^{it}l_2^{-it} \invmellin{\gamma_4} \sum_{t_j} (l_1l_2)^{\frac{k-1}{2}} c_{r,j} L_Q(\frac{1}{2},\conj{u_j})  \notag \\
  &\times  \betaF{\frac{1}{2}-it_j+r -\beta}{\beta + \tfrac{k-1}{2} +it_j-r+it}{\tfrac{k}{2}+it} \frac{\Gamma(\beta)\Gamma(G-\beta)}{\Gamma(G)} \,d  \beta \,d  s \label{eq_dis_res_rj}
\end{align}

Upon changing variable $\beta \mapsto \beta-it_j$, the expression above is related to the upcoming lemma.

\begin{lemma} \label{propo_crazy}
For $\re \beta = a \geq \frac{1}{2}+r+\varepsilon$ and $\mathcal{L} \ll Q$,
\begin{align}
  & G \sum_{t_j} L_Q(\half,\conj{u_j}) c_{r,j} \notag \\
  &\qquad \times  \invmellin{a} \!\! \betaF{\half + r -\beta}{\beta+\tfrac{k-1}{2} -r+i t}{\tfrac{k}{2}+it} \frac{\Gamma(\beta-it_j) \Gamma(G-\beta+it_j)}{\Gamma(G)} \,d  \beta \notag \\
  \ll & G^{1-a + \theta} \lt( (1+|t|)^{a-r-\half+\varepsilon} + (1+|t|)^{1 + r+ \varepsilon} \rt) Q^{-\half+\theta} \mathcal{L}^{1-k+\varepsilon} \label{eq_dis_res_bound}
\end{align}
In fact, to minimize this, the optimal choices are $\re \beta = a = \frac{3}{2} + 2r + \varepsilon$ and $G \asymp (1+|t|)^\frac{2}{3-2\theta} \log^5 Q$. With these choices, the bound is $O(G^{1+\varepsilon} Q^{-\half+\theta} \mathcal{L}^{1-k+\varepsilon})$.
\end{lemma}
This relates to \eqref{eq_dis_res_rj} by moving lines of integration and picking up relevent poles. We will delay the proof of this lemma to the end of this section.

Going back to the integral in \eqref{eq_dis_res_rj}, we will deal with the case $r=0$ first. We move the line of integration up to $\re \beta = \frac{3}{2} + \varepsilon$, picking up a residue at $\beta = \frac{3}{2}$. Then by lemma \ref{propo_crazy} with the optimal choice of $G$, the moved integral is $O(G^{1+\varepsilon} Q^{\half+\theta} \mathcal{L}^{3 + 2\alpha + \varepsilon})$. The residue is:
\begin{align}
  - \varphi(Q) G \sum_{l_1,l_2} \conj{\chi(l_1)}\chi(l_2) (l_1l_2)^{\frac{k}{2}+\alpha} l_1^{it}&l_2^{-it} (\tfrac{k}{2}+it) \notag \\
   &\times \sum_{t_j} L_Q(\half,\conj{u_j}) c_{0,j}  \frac{\Gamma(\tfrac{3}{2}-it_j) \Gamma(\beta-\frac{3}{2}+it_j)}{\Gamma(G)} \notag
\end{align}
With the same choice of $G$ as above, this residue is also $O(G^{1+\varepsilon} Q^{\half+\theta} \mathcal{L}^{3+2\alpha+\varepsilon})$. We assume the same choice of $G$ from now on.

For $1 \leq r \leq \frac{k}{2}$, we move the line of integration in \eqref{eq_dis_res_rj} up to $\re \beta = \frac{3}{2} + 2r + \varepsilon$. The moved integral is $O(G^{1+\varepsilon} Q^{\half+\theta} \mathcal{L}^{3+2\alpha+\varepsilon})$ as shown in lemma \ref{propo_crazy}. The residues for a particular $r$ are as follows:
\begin{align}
  &\varphi(Q) G \sum_{l_1,l_2} \conj{\chi(l_1)}\chi(l_2) (l_1l_2)^{\frac{k}{2}+\alpha} l_1^{it}l_2^{-it} L_Q(\half,\conj{u_j}) c_{r,j} \notag \\
  &\times \sum_{m=0}^{r+1} \frac{(-1)^m}{m!} \frac{\Gamma(\tfrac{k}{2}+m+it)}{\Gamma(\tfrac{k}{2}+it)} \frac{\Gamma(\half+r+m-it_j)\Gamma(G-\frac{1}{2}-r-m+it_j)}{\Gamma(G)} \notag
\end{align}
This whole sum is $o(GQ^{\half+\theta}\mathcal{L}^{3+2\alpha+\varepsilon})$.

Last but not least, we bound the moved integral, where $\re s = \gamma_2'=\frac{1}{2}-\frac{k}{2}-\theta-\varepsilon$:
\begin{align}
  &\res{w=\frac{1}{2}}\res{s'=w} S_{o_1}^d \notag \\
  = & \lim_{\delta \to 0}\, \varphi(Q) G \sum_{l_1,l_2} \conj{\chi(l_1)}\chi(l_2) (l_1l_2)^{\half+\alpha} l_1^{it}l_2^{-it} \notag \\
  &\times \dblinvmellin{\gamma_2'}{\gamma_4} \frac{(4\pi)^k 2^{s-\half} (l_1l_2)^{\frac{k-1}{2}}}{\Gamma(s+k-1) 2\sqrt{\pi}} \sum_{t_j} L_Q(\frac{1}{2},\conj{u_j}) M(s,t_j,\delta) \conj{\langle U, u_j \rangle} \notag \\
  &\times  \betaF{1- s -\beta}{s + \beta + \tfrac{k}{2} - 1+it}{\tfrac{k}{2}+it} \frac{\Gamma(\beta)\Gamma(G-\beta)}{\Gamma(G)} \,d  \beta \,d  s \label{eq_dis_moved}
\end{align}
Note that for $\re s < \half - \frac{k}{2}$, we note the following bound which is true regardless of relative sizes of $|t_j|$ and $|\im s|$:
\[ \lim_{\delta \to 0} \frac{2^{s-\half}M(s,t_j,\delta)}{\Gamma(s+k-1) 2\sqrt{\pi}}  \ll (1+|t_j|)^{2\re s - 2} (1+|\im s|)^{3-3\re s - k} \]
Hence, at $\re s = \half - \frac{k}{2} - \theta - \varepsilon$, using the above bound and \eqref{eq_L_inner},we have:
\begin{align*}
    \lim_{\delta \to 0} \frac{(4\pi)^k 2^{s-\half}}{\Gamma(s+k-1) 2\sqrt{\pi}} &\sum_j L_Q(\half,\conj{u_j}) M(s,t_j,\delta) \conj{\langle U, u_j \rangle} \notag \\
   &\ll  (1+|\im s|)^{\frac{3}{2} +\frac{k}{2} + 3\theta +3\varepsilon} Q^{-\half+\theta} \mathcal{L}^{1-k+\varepsilon}
\end{align*}

Using the fact noted above and the methods of proving lemma \ref{propo_crazy}, we have the following auxiliary lemma:
\begin{lemma} \label{propo_crazy2}
For  $G \asymp (1+|t|)^{\frac{2}{3-2\theta}} \log^5 Q$ and $\re \beta = a = \frac{5}{2} + k + 5\theta + 2\varepsilon$,
\begin{align}
  &G \dblinvmellin{\gamma'_2}{a} \sum_{t_j} \lim_{\delta \to 0} \frac{(4\pi)^k 2^{s-\half}M(s,t_j,\delta)}{\Gamma(s+k-1) 2\sqrt{\pi}}  L_Q(\half,\conj{u_j})  \conj{\langle U, u_j \rangle} \notag \\
  & \qqquad \qqquad \quad \times \betaF{1 - s - \beta}{s+\beta + \tfrac{k}{2} - 1 +it}{\tfrac{k}{2}+it} \frac{\Gamma(\beta)\Gamma(G-\beta)}{\Gamma(G)} \,d  \beta \,d  s \notag \\
  \ll & G^{1+\varepsilon} Q^{-\half+\theta} \mathcal{L}^{1-k+\varepsilon} \label{eq_dis_moved_bound}
\end{align}
\end{lemma}
Again in \eqref{eq_dis_moved}, we move $\re \beta$ to $\frac{5}{2} + k + 5\theta + 2\varepsilon$, hitting poles at $\beta = 1 - s + \ell$, where $0 \leq \ell \leq \frac{k}{2} + 2$. Using the proposition above and $G \asymp (1+|t|)^{\frac{2}{3-2\theta}} \log^3 Q$, the moved integral is $O(G^{1+\varepsilon} Q^{\half+\theta+\varepsilon} \mathcal{L}^{3+2\alpha+\varepsilon})$. The residue at $\beta = 1-s + \ell$ is:
\begin{align}
  \lim_{\delta \to 0} \varphi(Q) G \sum_{l_1,l_2} &\conj{\chi(l_1)}\chi(l_2) (l_1l_2)^{\frac{k}{2}+\alpha} l_1^{it}l_2^{-it} \notag \\
  &\times \invmellin{\gamma'_2} \sum_j \frac{(4\pi)^k 2^{s-\half}M(s,t_j,\delta)}{\Gamma(s+k-1) 2\sqrt{\pi}} L_Q(\half,\conj{u_j})  \conj{\langle U, u_j \rangle} \notag \\
  & \qquad \qquad \times \frac{(-1)^\ell}{\ell !} \frac{\Gamma(\tfrac{k}{2} + \ell +it)}{\Gamma(\tfrac{k}{2}+it)} \frac{\Gamma(1-s+\ell)\Gamma(G+s-1-\ell)}{\Gamma(G)} \,d  s \notag
\end{align}
These residues are $o(G Q^{\half+\varepsilon} \mathcal{L}^{3+2\alpha+\varepsilon} )$. Putting all these together, we see the residue at $w=\frac{1}{2}$ is bounded by $O(G Q^{\half+\varepsilon} \mathcal{L}^{3+2\alpha+\varepsilon} )$.

\subsubsection{The moved integral at $w=\frac{1}{2}-\varepsilon$}
We see that the $x$-exponent is $O(x^{-\varepsilon})$. Moving the line of integration for $\re s$ such that we can take the $\delta$-limit just as demonstrated before, we see that this term is $O(x^{-\varepsilon})$.

Putting all the cases so far together, we finish proving proposition \ref{eq_S_o1d_Zres_propo}.

\end{proof}

\subsection{Contribution from the pole $s'=s+\beta-\frac{1}{2}$}
\begin{proposition} \label{eq_S_o1d_Z_sp2}
For $G \asymp (1+|t|)^{\frac{2}{3-2\theta}} \log^5 Q$,
  \[ \res{s'=s+\beta-\frac{1}{2}} S_{o_1}^d \ll x^{-\varepsilon}, \]
\end{proposition}

\begin{proof}
  We start by writing down the residue at $s'=s+\beta-\frac{1}{2}$:
  \begin{align}
  &\res{s'=s+\beta-\frac{1}{2}} S_{o_1}^d \notag \\
  = & \lim_{\delta \to 0}\, \varphi(Q) G \sum_{l_1,l_2} \conj{\chi(l_1)}\chi(l_2) (l_1l_2)^{\half+\alpha} l_1^{it}l_2^{-it} \notag \\
  &\!\!\times \triinvmellin{\gamma_1}{\gamma_2}{\gamma_4} \frac{(4\pi)^k 2^{s-\half} (l_1l_2)^{\frac{k-1}{2}}}{\Gamma(s+k-1) 2\sqrt{\pi}}  x^{s+\beta-1} l_2^{s+\beta-\frac{1}{2}-w} l_1^{w-\half} v( w-\tfrac{1}{2}) \notag \\
  &\!\!\times  v( s+\beta-\tfrac{1}{2}-w)  \sum_{t_j} L_Q(s+\beta-\tfrac{1}{2},\conj{u_j}) M(s,t_j,\delta) \conj{\langle U, u_j \rangle} \frac{\Gamma(\beta)\Gamma(G-\beta)}{\Gamma(G)} \,d  \beta \,d  s \,d  w \label{eq_S_o1d_sp2_first}
\end{align}
We move $\re \beta$ down to $\frac{1}{4}-\varepsilon$, during which we do not encounter any poles. Hence, this term is $O(x^{-\varepsilon})$, after moving $\re s$ to $\frac{1}{2}-\frac{k}{2}-\theta-\varepsilon$ to enable us taking $\delta \to 0$.
\end{proof}

\subsection{Contribution of the moved integral at $\re s' = \frac{1}{2}-\varepsilon$}
We see that the $x$-exponent is negative. Moving $\re s$ down to enable us taking the $\delta$-limit, this term is also $O(x^{-\varepsilon})$.

\subsection{Proof of lemma \eqref{propo_crazy}}
For convenience, we write $\beta = a + it_\beta$, where $a, t_\beta \in \mathbb{R}$. We will split the object to be analyzed as follows:
\begin{align}
  & G \sum_{t_j} L_Q(\half,\conj{u_j}) c_{r,j} \notag \\
  &\qquad \times  \invmellin{a} \!\! \betaF{\half + r -\beta}{\beta+\tfrac{k-1}{2} -r+i t}{\tfrac{k}{2}+it} \frac{\Gamma(\beta-it_j)\Gamma(G-\beta+it_j)}{\Gamma(G)} \,d  \beta \notag \\
  =& G \sum_{t_j} L_Q(\half,\conj{u_j}) c_{r,j} (P_1 + P_2 + P_3), \notag
\end{align}
where
\begin{align}
  P_1 &= \frac{1}{2\pi i} \!\!\!\!\!\!\!\! \int \limits_{\substack{\beta = a + it_\beta \\ |t_\beta| \leq |t| - \log^4 |t|}} \!\!\!\!\!\!\!\!\!\! \betaF{\half + r -\beta}{\beta+\tfrac{k-1}{2} -r+i t}{\tfrac{k}{2}+it} \frac{\Gamma(\beta-it_j)\Gamma(G-\beta+it_j)}{\Gamma(G)} \,d  \beta \notag \\
  P_2 &= \frac{1}{2\pi i} \!\!\!\!\!\!\!\! \int \limits_{\substack{\beta = a + it_\beta \\ \lt| |t_\beta| - |t| \rt| \leq \log^4 |t|}} \!\!\!\!\!\!\!\!\!\! \betaF{\half + r -\beta}{\beta+\tfrac{k-1}{2} -r+i t}{\tfrac{k}{2}+it} \frac{\Gamma(\beta-it_j)\Gamma(G-\beta+it_j)}{\Gamma(G)} \,d  \beta \notag \\
  P_3 &= \frac{1}{2\pi i} \!\!\!\!\!\!\!\! \int \limits_{\substack{\beta = a + it_\beta \\ |t_\beta| \geq |t| + \log^4 |t|}} \!\!\!\!\!\!\!\!\!\! \betaF{\half + r -\beta}{\beta+\tfrac{k-1}{2} -r+i t}{\tfrac{k}{2}+it} \frac{\Gamma(\beta-it_j)\Gamma(G-\beta+it_j)}{\Gamma(G)} \,d  \beta \notag
\end{align}
In each part, we seek to bound the integrand using Stirling's formula.
\subsubsection{The case where $|t_\beta| \leq |t| - \log^4 |t|$: $P_1$}
The ratio of gammas is bounded by:
\begin{align}
  &\betaF{\half + r -\beta}{\beta+\tfrac{k-1}{2} -r+i t}{\tfrac{k}{2}+it} \frac{\Gamma(\beta-it_j)\Gamma(G-\beta+it_j)}{\Gamma(G)} \notag \\
\ll &G^{-a+\theta} (1 + |t|)^{a-r-\half} (1+|t_\beta|)^{r-a} (1+|t_\beta - t_j|)^{a+\theta-\half} e^{-\frac{\pi}{2} |t_\beta-t_j|} \notag
\end{align}
We further separate this case by the relative sizes of $|t_j|$ and $|t|$:
\begin{enumerate}[leftmargin=*]
\item If $|t_j| \geq |t|$, then we can further conclude that the integrand is bounded by:
\[ (1 + |t|)^{a-r-\half} (1+|t_\beta|)^{r-a} (1+|t_j|)^{a+\theta-\half} e^{-\frac{\pi}{2} |t_j|} e^{\frac{\pi}{2} |t_\beta|} G^{-a+\theta} \]
Executing the $\beta$-integral and then summing over such $|t_j|$'s using the bound in proposition \ref{propo_j_sums}, we have that:
\[ G  \sum_{|t_j| \geq |t|}  L_Q(\half,\conj{u_j}) c_{r,j} P_1 \ll G^{1-a+\theta} (1 + |t|)^{a-r-\half+\varepsilon} Q^{-\half} L^{1-k+\varepsilon} \]
\vspace{0.4em}
\item If $|t_j| \leq |t|$, then we separate the integral as follows:
  \[ P_1 = P_{1,1} + P_{1,2} + P_{1,3}, \]
  where
  \begin{align*}
    P_{1,1} &= \frac{1}{2\pi i} \!\!\!\!\!\!\!\! \int \limits_{\substack{\beta = a + it_\beta \\ |t_\beta| \leq |t_j| - \log^4 |t_j|}} \!\!\!\!\!\!\!\!\!\! \betaF{\half + r -\beta}{\beta+\tfrac{k-1}{2} -r+i t}{\tfrac{k}{2}+it} \frac{\Gamma(\beta-it_j)\Gamma(G-\beta+it_j)}{\Gamma(G)} \,d  \beta \notag \\
    P_{1,2} &= \frac{1}{2\pi i} \!\!\!\!\!\!\!\! \int \limits_{\substack{\beta = a + it_\beta \\ \lt| |t_\beta| - |t_j| \rt| \leq \log^4 |t_j|}} \!\!\!\!\!\!\!\!\!\! \betaF{\half + r -\beta}{\beta+\tfrac{k-1}{2} -r+i t}{\tfrac{k}{2}+it} \frac{\Gamma(\beta-it_j)\Gamma(G-\beta+it_j)}{\Gamma(G)} \,d  \beta \notag \\
    P_{1,3} &= \frac{1}{2\pi i} \!\!\!\!\!\!\!\! \int \limits_{\substack{\beta = a + it_\beta \\ |t_\beta| \geq |t_j| + \log^4 |t_j| \\ |t_\beta| \leq |t| - \log^4 |t|}} \!\!\!\!\!\!\!\!\!\! \betaF{\half + r -\beta}{\beta+\tfrac{k-1}{2} -r+i t}{\tfrac{k}{2}+it} \frac{\Gamma(\beta-it_j)\Gamma(G-\beta+it_j)}{\Gamma(G)} \,d  \beta \notag
  \end{align*}
  \begin{enumerate}[leftmargin=*]
    \item In the subcase $|t_\beta| \leq |t_j| - \log^4 |t_j|$, this subcase has the same effect as case 1:
      \[ G \sum_{|t_j| \leq |t|} L_Q(\half,\conj{u_j}) c_{r,j} P_{1,1} \ll G^{1-a+\theta} (1 + |t|)^{a-r-\half+\varepsilon} Q^{-\half} L^{1-k+\varepsilon} \]
    \item In the subcase $\lt| |t_\beta| - |t_j| \rt| \leq \log^4 |t_j|$, the integrand is bounded by
      \[ (1 + |t|)^{a-r-\half+\varepsilon} (1+|t_\beta|)^{r-a} G^{-a+\theta}. \]
      Now executing the $\beta$-integral and then summing $|t_j|$, we obtain that
      \[ G \sum_{|t_j| \leq |t|} L_Q(\half,\conj{u_j}) c_{r,j} P_{1,2} \ll G^{1-a+\theta} (1 + |t|)^{1+r+\varepsilon} Q^{-\half} L^{1-k+\varepsilon} \]
    \item In the last subcase, $|t_j| + \log^4 |t_j| \leq |t_\beta| \leq |t| - \log^4 |t|$, the integrand is bounded by:
      \[ (1 + |t|)^{a-r-\half} (1+|t_\beta|)^{r+\theta-\half} e^{-\frac{\pi}{2} |t_\beta|} e^{\frac{\pi}{2}|t_j|} G^{-a+\theta}.\]
      Now executing the $\beta$-integral and then summing $|t_j|$, we obtain that
     \[ G \sum_{|t_j| \leq |t|} L_Q(\half,\conj{u_j}) c_{r,j} P_{1,3} \ll G^{1-a+\theta} (1 + |t|)^{a-r-\half+\varepsilon} Q^{-\half} L^{1-k+\varepsilon} \]
  \end{enumerate}
\end{enumerate}
Hence, in total:
\begin{align}
  G \sum_{j}  L_Q(\half,\conj{u_j}) &c_{r,j} P_1 \notag \\
   &\ll G^{1-a+\theta} \lt( (1 + |t|)^{1+r+\varepsilon} + (1+|t|)^{a-r-\half+\varepsilon} \rt) Q^{-\half} L^{1-k+\varepsilon} \notag
\end{align}

\subsubsection{The case where $\lt| |t_\beta| - |t| \rt| \leq \log^4 |t|$: $P_2$}
The ratio of gammas is bounded by:
\begin{align}
  &\betaF{\half + r -\beta}{\beta+\tfrac{k-1}{2} -r+i t}{\tfrac{k}{2}+it} \frac{\Gamma(\beta-it_j)\Gamma(G-\beta+it_j)}{\Gamma(G)}\notag \\
&\qqqquad \ll G^{-a+\theta} (1 + |t|)^{r-a-\frac{k-1}{2}} (1+|t_\beta - t_j|)^{a+\theta-\half} e^{-\frac{\pi}{2} |t_\beta-t_j|} \notag
\end{align}
Separating cases by the relative sizes of $|t_j|$ and $|t|$, we can show that this term is completely overshadowed by $P_1$.

\subsubsection{The case where $|t_\beta| \geq |t| + \log^4 |t|$: $P_3$}
The ratio of gammas is bounded by:
\begin{align}
  &\betaF{\half + r -\beta}{\beta+\tfrac{k-1}{2} -r+i t}{\tfrac{k}{2}+it} \frac{\Gamma(\beta-it_j)\Gamma(G-\beta+it_j)}{\Gamma(G)} \notag \\
\ll &G^{-a+\theta} (1 + |t|)^{-\frac{k-1}{2}} (1+|t_\beta|)^{\frac{k}{2}-1} e^{-\pi|t_\beta|} e^{\pi|t|} (1+|t_\beta - t_j|)^{a+\theta-\half} e^{-\frac{\pi}{2} |t_\beta-t_j|} \notag
\end{align}
Executing the $\beta$-integral will show that the result will have decay in $|t|$ that is faster than every polynomial. Hence, this case is also negligible.

Putting the cases together, we realize that:
\begin{align}
  & G \sum_{t_j} L_Q(\half,\conj{u_j}) c_{r,j} \notag \\
  &\qquad \times  \invmellin{a} \!\! \betaF{\half + r -\beta}{\beta+\tfrac{k-1}{2} -r+i t}{\tfrac{k}{2}+it} \frac{\Gamma(\beta-it_j)\Gamma(G-\beta+it_j)}{\Gamma(G)} \,d  \beta \notag \\
  \ll&  G^{1-a+\theta} \lt( (1 + |t|)^{1+r+\varepsilon} + (1+|t|)^{a-r-\half+\varepsilon} \rt) Q^{-\half} L^{1-k+\varepsilon} \notag
\end{align}
Our goal is to minimize $G$. In order to do this, we want to increase $a$, looking at the term $(1+|t|)^{1+r+\varepsilon}$, which has a fixed exponent. However, the other term dominates if $a$ is too large, with increasingly worse behavior as $a$ increases. Hence, we set the exponents of the terms to equal each other. This gives $a = \frac{3}{2} + 2r + \varepsilon$ (the $\varepsilon$ is added in to avoid the pole on the line $\re \beta = \frac{3}{2} + 2r$.)

Finally, we can proceed to get a lower bound of $G$ such that the above is $O(G^{1+\varepsilon} Q^{-\half} L^{1-k+\varepsilon})$, such that the contribution in $(1+|t|)$-aspect is at most as much as in $S_d$. It turns out that $G \asymp (1+|t|)^{\frac{2}{3-2\theta}} \log^5 Q$ when $r=0$. The $G$ required for larger values of $r$ has a smaller $(1+|t|)$-exponent. This proves the proposition.

\section{Off-diagonal, continuous spectrum}
Quoting the continuous spectrum from \eqref{eq_Z_decomp}, the object to analyze here is:
\begin{align}
  S_{o_1}^c = &\lim_{\delta \to 0}  \varphi(Q) G \sum_{l_1,l_2} \conj{\chi(l_1)}\chi(l_2) (l_1l_2)^{\frac{k}{2}+\alpha} l_1^{it}l_2^{-it} \notag \\
  &\times \quadinvmellin{\gamma_1}{\gamma_2}{\gamma_3}{\gamma_4} \frac{(4\pi)^k 2^{s-\half}}{\Gamma(s+k-1) 2\sqrt{\pi}} \notag \\
  &\times \mathcal{V}_{N[l_1,l_2]} \sum_\mathfrak{a} \invmellin{0} \zeta_{\mathfrak{a},Q}(s',-z) M(s,\tfrac{z}{i},\delta)  \conj{ \langle U,E_\mathfrak{a}(*,\half+z) \rangle}  \notag \\
  &\times  \betaF{s' - s + \half - \beta}{s + w+ \beta - s' + \tfrac{k}{2} - 1+it}{w+ \tfrac{k-1}{2}+it}  \notag \\
  &\times v( s'-w) v( w-\half) x^{s'-\half} l_2^{s'-w} l_1^{w-\half} \frac{\Gamma(\beta)\Gamma(G-\beta)}{\Gamma(G)} \,d  z \,d  \beta \,d  s' \,d  s \,d  w \label{eq_S_o1c_6int}
\end{align}
We will move $\re z$ slightly to the right to a curve $C$, which has the property that if $z$ is any complex number between 0 and $C$, $\zeta^*(1-2z) \neq 0$.

For the most part, there will be a lot of similiarities in how we analyze this 5-fold integral compared to how we analyze the discrete spectrum expression in the last section. In particular, we start by moving $\re s'$ down to $\frac{1}{2}-\varepsilon$, passing through simple poles at $s'=w$, $s'=1+z$, $s'=1-z$ and $s'=s+\beta-\frac{1}{2}$. As seen in the case of discrete spectrum, the moved integral here is $O(x^{-\varepsilon})$.

\subsection{The residue at $s'=w$}
This is the pole that has most of the contribution from the continuous part.
\begin{proposition} \label{propo_total_cts}
For $G \asymp (1+|t|)^{\frac{2}{3-2\theta}} \log^5 Q$,
\begin{align}
 \res{s'=w} S_{o_1}^c =&-\frac{\varphi(Q) G}{2 \Gamma(k)} \sum_{l_1,l_2} \conj{\chi(l_1)}\chi(l_2) (l_1l_2)^{\frac{1}{2}+\alpha} l_1^{it}l_2^{-it}  (4\pi)^k \langle f,f \rangle \log x  \cdot  b_{l_1,l_2}  \notag \\
  &+ O(G^{1+\varepsilon} Q^{1+\varepsilon} \mathcal{L}^{1+2\alpha+\varepsilon}) + O(G^{1+\varepsilon} Q^{\half+\varepsilon} \mathcal{L}^{3 + 2\alpha + \varepsilon}) + O(x^{-\varepsilon}), \label{prop_total_cts}
\end{align}
where
\begin{align}
  b_{l_1,l_2} = \begin{cases} l_1^{-1} & \text{ if $l_1 =  l_2$} \\ (l_1l_2)^{-1} E_{l_1,l_2}(1) & \text{ if $l_1 \neq l_2$} \end{cases} \label{eq_defn_b_l1l2}
\end{align}
\end{proposition}
\begin{remark}
  Note the sum of the $(\log x)$-terms above is exactly $-\half$ of the $(\log x)$-terms in $S_d$.
\end{remark}

\begin{proof}
  We first write down the residue at $s'=w$:
  \begin{align}
  \res{s'=w} S_{o_1}^c 
  = &\lim_{\delta \to 0}  \varphi(Q) G \sum_{l_1,l_2} \conj{\chi(l_1)}\chi(l_2) (l_1l_2)^{\frac{k}{2}+\alpha} l_1^{it}l_2^{-it} \notag \\
  &\times \quadinvmellin{\gamma_1}{\gamma_2}{\gamma_4}{C} \frac{(4\pi)^k 2^{s-\half}}{\Gamma(s+k-1) 2\sqrt{\pi}} \frac{\Gamma(\beta)\Gamma(G-\beta)}{\Gamma(G)} \notag \\
  &\times \mathcal{V}_{N[l_1,l_2]} M(s,\tfrac{z}{i},\delta)  \betaF{w - s + \half - \beta}{s + \beta+ \tfrac{k}{2} - 1+it}{w+ \tfrac{k-1}{2}+it}  \notag \\
  &\times v( w-\half) (xl_1)^{w-\half} \sum_\mathfrak{a} \zeta_{\mathfrak{a},Q}(w,-z)   \conj{ \langle U,E_\mathfrak{a}(*,\half+z) \rangle} \,d  z \,d  \beta \,d  s \,d  w \label{eq_S_o1c_sp_lead}
  \end{align}
  At this point, it is natural to move $\re w$ to $\frac{1}{2}-\varepsilon$, which yields residue terms at $w=1+z$, $w=1-z$, $w=s+\beta-\frac{1}{2}$ and $w=\frac{1}{2}$. The moved integral is $O(x^{-\varepsilon})$.
  
\subsubsection{The residue at $w=1+z$}
  The residue here is:
  \begin{align}
  &\res{w=1+z} \res{s'=w} S_{o_1}^c \notag \\
  = &\lim_{\delta \to 0}  \varphi(Q) G \sum_{l_1,l_2} \conj{\chi(l_1)}\chi(l_2) (l_1l_2)^{\frac{k}{2}+\alpha} l_1^{it}l_2^{-it} \notag \\
  &\times \triinvmellin{\gamma_2}{\gamma_4}{C} \frac{(4\pi)^k 2^{s-\half}}{\Gamma(s+k-1) 2\sqrt{\pi}} \frac{\pi^{\frac{1}{2}-z}}{\Gamma(\frac{1}{2}-z)}\notag \\
  &\times \mathcal{V}_{N[l_1,l_2]} \sum_\mathfrak{a=\frac{b}{c}} \left( \frac{(c,\frac{N}{c})}{N\frac{N}{c}}\right)^{\frac{1}{2}-z}  \frac{\varphi(\frac{N}{c})}{ \varphi((c,\frac{N}{c}))} \frac{\zeta(1+2z)}{\zeta^{(N)}(1-2z)}  \conj{ \langle U,E_\mathfrak{a}(*,\half+z) \rangle}  \notag \\
  &\times  \betaF{\frac{3}{2}+z - s - \beta}{s + \beta+ \tfrac{k}{2} - 1+it}{z+ \tfrac{k+1}{2}+it} M(s,\tfrac{z}{i},\delta) \frac{\Gamma(\beta)\Gamma(G-\beta)}{N^{\frac{1}{2}+z}\Gamma(G)}\notag \\
  &\times v( \tfrac{1}{2}+z) (xl_1)^{\half+z} \prod_{p | c} (1-p^{2z}) Q^{-(1+2z)} \prod_{p^\gamma \| Q} (\sigma_{2z}(p^\gamma) - p^{-1} \sigma_{2z}(p^{\gamma-1})) \,d  z \,d  \beta \,d  s\label{eq_S_o1c_sw_lead}
  \end{align}
  Applying function equation to $\zeta(1+2z)$ and doing some simplifications, we obtain:
  \begin{align}
  &\res{w=1+z} \res{s'=w} S_{o_1}^c \notag \\
  = &\lim_{\delta \to 0}  \varphi(Q) G \sum_{l_1,l_2} \conj{\chi(l_1)}\chi(l_2) (l_1l_2)^{\frac{k}{2}+\alpha} l_1^{it}l_2^{-it} \notag \\
  &\times \triinvmellin{\gamma_2}{\gamma_4}{C} \frac{(4\pi)^k 2^{s-\half}\pi^{\frac{1}{2}+z}Q^{-1}}{\Gamma(s+k-1) 2\sqrt{\pi}}  \!\!\! \prod_{p^\gamma \| Q} \!\! (\sigma_{-2z}(p^\gamma) - p^{-1-2z} \sigma_{-2z}(p^{\gamma-1}))\notag \\
  &\times \mathcal{V}_{N[l_1,l_2]} \sum_\mathfrak{a=\frac{b}{c}}\frac{\sqrt{\pi}\Gamma(-z)}{\Gamma(\frac{1}{2}-z)} \left( \frac{(c,\frac{N}{c})}{N\frac{N}{c}}\right)^{\frac{1}{2}-z}  \frac{\varphi(\frac{N}{c})}{ \varphi((c,\frac{N}{c}))} \frac{\zeta^{(c)}(-2z)}{\zeta^{(N)}(1-2z)}  \conj{ \langle U,E_\mathfrak{a}(*,\half+z) \rangle}  \notag \\
  &\times  \betaF{\frac{3}{2}+z - s - \beta}{s + \beta+ \tfrac{k}{2} - 1+it}{z+ \tfrac{k+1}{2}+it} M(s,\tfrac{z}{i},\delta) \frac{\Gamma(\beta)\Gamma(G-\beta)}{N^{\frac{1}{2}+z}\Gamma(G)\Gamma(\frac{1}{2}+z)}\notag \\
  &\times v( \half+z) (xl_1)^{\half+z}  \,d  z \,d  \beta \,d  s\label{eq_S_o1c_sw_lead2}
  \end{align}
  At this point, we use functional equation of Eisenstein series at $0$-cusp \eqref{FE_Eisen_0}, simplifying this further to:
  \begin{align}
  &\res{w=1+z} \res{s'=w} S_{o_1}^c \notag \\
  = &\lim_{\delta \to 0}  \varphi(Q) G \sum_{l_1,l_2} \conj{\chi(l_1)}\chi(l_2) (l_1l_2)^{\frac{k}{2}+\alpha} l_1^{it}l_2^{-it} \notag \\
  &\times \triinvmellin{\gamma_2}{\gamma_4}{C} \frac{(4\pi)^k 2^{s-\half}\pi^{\frac{1}{2}+z}Q^{-1}}{\Gamma(s+k-1) 2\sqrt{\pi}} \!\!\! \prod_{p^\gamma \| Q} \!\! (\sigma_{-2z}(p^\gamma) - p^{-1-2z} \sigma_{-2z}(p^{\gamma-1}))\notag \\
  &\times \mathcal{V}_{N[l_1,l_2]} \conj{ \langle U,E_0(*,\half-z) \rangle} \betaF{\frac{3}{2}+z - s - \beta}{s + \beta+ \tfrac{k}{2} - 1+it}{z+ \tfrac{k+1}{2}+it} \notag \\
  &\times M(s,\tfrac{z}{i},\delta) \frac{\Gamma(\beta)\Gamma(G-\beta)}{N^{\frac{1}{2}+z}\Gamma(G)\Gamma(\frac{1}{2}+z)} v( \half+z) (xl_1)^{\half+z}  \,d  z \,d  \beta \,d  s\label{eq_S_o1c_sw_sim}
  \end{align}
  We now move $\re z$ to $-\frac{1}{2}-\varepsilon$, picking up poles at $z=s+\beta-\frac{3}{2}$ and $z=\frac{1}{2}-s$. There is technically also a pole at $z=-\frac{1}{2}$, but its residue becomes $0$ when $\delta \to 0$. The moved integral is $O(x^{-\varepsilon})$.
  
  We investigate the residue at $z=s+\beta-\frac{3}{2}$:
  \begin{align}
  &\res{z=s+\beta-\frac{3}{2}} \res{w=1+z} \res{s'=w} S_{o_1}^c \notag \\
  = &\lim_{\delta \to 0}  \varphi(Q) G \sum_{l_1,l_2} \conj{\chi(l_1)}\chi(l_2) (l_1l_2)^{\frac{k}{2}+\alpha} l_1^{it}l_2^{-it} \dblinvmellin{\gamma_2}{\gamma_4} \frac{(4\pi)^k 2^{s-\half}\pi^{s+\beta-1}}{\Gamma(s+k-1) 2\sqrt{\pi}} \notag \\
  &\times Q^{-1} \prod_{p^\gamma \| Q} (\sigma_{3-2s-2\beta}(p^\gamma) - p^{2-2s-2\beta} \sigma_{3-2s-2\beta}(p^{\gamma-1}))M(s,\tfrac{s+\beta-\frac{3}{2}}{i},\delta)\notag \\
  &\times \mathcal{V}_{N[l_1,l_2]} \conj{ \langle U,E_0(*,2-s-\beta) \rangle}   \frac{\Gamma(\beta)\Gamma(G-\beta)v(s+\beta-1)}{N^{s+\beta-1}\Gamma(G)\Gamma(s+\beta-1)}  (xl_1)^{s+\beta-1}  \,d  \beta \,d  s\label{eq_S_o1c_key}
  \end{align}
  We move $\re \beta$ to $\frac{1}{4}-\varepsilon$, hitting a pole from the $M$-function at $\beta = 2-2s$. Again, there is technically a pole at $\beta=1-s$, but the residue vanishes upon taking $\delta \to 0$. The moved integral is again $O(x^{-\varepsilon})$. Taking $\delta \to 0$, the residue at $\beta = 2-2s$ is:
  \begin{align}
  &\res{\beta=2-2s} \res{z=s+\beta-\frac{3}{2}} \res{w=1+z} \res{s'=w} S_{o_1}^c \notag \\
  = &\varphi(Q) G \sum_{l_1,l_2} \conj{\chi(l_1)}\chi(l_2) (l_1l_2)^{\frac{k}{2}+\alpha} l_1^{it}l_2^{-it} \invmellin{\gamma_2} \frac{(4\pi)^k \pi^{1-s}}{2\Gamma(s+k-1)} \notag \\
  &\times Q^{-1} \prod_{p^\gamma \| Q} (\sigma_{2s-1}(p^\gamma) - p^{2s-2} \sigma_{2s-1}(p^{\gamma-1})) \frac{\Gamma(2s-1)}{\Gamma(s)}\notag \\
  &\times \mathcal{V}_{N[l_1,l_2]} \conj{ \langle U,E_0(*,s) \rangle}   \frac{\Gamma(2-2s)\Gamma(G-2+2s)v(1-s)}{N^{1-s}\Gamma(G)\Gamma(1-s)}  (xl_1)^{1-s}  \,d  s\label{eq_S_o1c_key2}
  \end{align}
  We move $\re s$ up to $1+\varepsilon$, encountering a double pole at $s=1$. The moved integral is $O(x^{-\varepsilon})$. The residue here is:
  \begin{align}
    &\varphi(Q) G \sum_{l_1,l_2} \conj{\chi(l_1)}\chi(l_2) (l_1l_2)^{\frac{1}{2}+\alpha} l_1^{it}l_2^{-it} \lt( - \frac{(4\pi)^k}{4\Gamma(k)} \langle f, f\rangle b_{l_1,l_2} \log(xl_1) + c_1 \rt),
  \end{align}
  where $b_{l_1,l_2}$ is as defined from \eqref{eq_defn_b_l1l2} and
  \begin{align*}
    c_1 = \res{s=1} \Big(& \frac{ (4\pi)^k \pi^{1-s}}{2\Gamma(s+k-1)} Q^{-1} \prod_{p^\gamma \| Q} (\sigma_{2s-1}(p^\gamma) - p^{2s-2} \sigma_{2s-1}(p^{\gamma-1})) \frac{\Gamma(2s-1)}{\Gamma(s)} \\
  &\qqquad \times \mathcal{V}_{N[l_1,l_2]} \conj{ \langle U,E_0(*,s) \rangle}   \frac{\Gamma(2-2s)\Gamma(G-2+2s)v(1-s)}{N^{1-s}\Gamma(G)\Gamma(1-s)}  \Big)
  \end{align*}
  The one thing we need to know about $c_1$ is that it is $O(Q^{\varepsilon} G^{\varepsilon} \mathcal{L}^{1-k+\varepsilon})$.
  
  Continuing the investigating of residue terms, we look at the residue at $z=\frac{1}{2}-s$:
  \begin{align}
  &\res{z=\frac{1}{2}-s} \res{w=1+z} \res{s'=w} S_{o_1}^c \notag \\
  = &\varphi(Q) G \sum_{l_1,l_2} \conj{\chi(l_1)}\chi(l_2) (l_1l_2)^{\frac{k}{2}+\alpha} l_1^{it}l_2^{-it} \notag \\
  &\times \dblinvmellin{\gamma_2}{\gamma_4} \frac{(4\pi)^k \pi^{1-s}}{2\Gamma(s+k-1)} Q^{-1} \prod_{p^\gamma \| Q} (\sigma_{2s-1}(p^\gamma) - p^{2s-2} \sigma_{2s-1}(p^{\gamma-1}))\notag \\
  &\times \mathcal{V}_{N[l_1,l_2]} \conj{ \langle U,E_0(*,s) \rangle} \betaF{2 - 2s - \beta}{s + \beta+ \tfrac{k}{2} - 1+it}{ \tfrac{k}{2}+1-s+it} \notag \\
  &\times \frac{\Gamma(2s-1)}{\Gamma(s)}\frac{\Gamma(\beta)\Gamma(G-\beta)}{N^{1-s}\Gamma(G)\Gamma(1-s)} v(1-s) (xl_1)^{1-s}\,d  \beta \,d  s\label{eq_S_o1c_sw_2nd}
  \end{align}
  We move $\re s$ up to $1+\varepsilon$, hitting a simple pole at $s=1$. The moved integral is $O(x^{-\varepsilon})$. The residue at $s=1$ is:
  \begin{align}
  &\res{s=1} \res{z=\frac{1}{2}-s} \res{w=1+z} \res{s'=w} S_{o_1}^c \notag \\
  = &\varphi(Q) G \sum_{l_1,l_2} \conj{\chi(l_1)}\chi(l_2) (l_1l_2)^{\frac{1}{2}+\alpha} l_1^{it}l_2^{-it} \notag \\
  &\times \invmellin{\gamma_4} \frac{(4\pi)^k}{2\Gamma(k)} \langle f, f\rangle b_{l_1,l_2} \betaF{- \beta}{\beta+ \tfrac{k}{2} +it}{ \tfrac{k}{2}+it} \frac{\Gamma(\beta)\Gamma(G-\beta)}{\Gamma(G)} \,d  \beta \notag \\
  \ll& Q \mathcal{L}^{1+2\alpha+\varepsilon} G^{1+\varepsilon} \label{eq_S_o1c_sw_2nd_d}
  \end{align}
  Putting together all the cases here, we have:
  \begin{lemma}
  For $G \asymp (1+|t|)^\frac{2}{3-2\theta} \log^5 Q$,
\begin{align}
 & \res{w=1+z}\res{s' = w} S_{o_1}^c  \notag \\
 = &-\varphi(Q) G \sum_{l_1,l_2} \conj{\chi(l_1)}\chi(l_2) (l_1l_2)^{\frac{1}{2}+\alpha} l_1^{it}l_2^{-it} (4\pi)^k \langle f,f \rangle \frac{\log x}{4\Gamma(k)} \cdot b_{l_1,l_2} \notag \\
  &+ O(G^{1+\varepsilon} Q^{1+\varepsilon} L^{1+2\alpha+\varepsilon}) + O(x^{-\varepsilon}), \label{prop_w_+z}
\end{align}
where $b_{l_1,l_2}$ is defined as in \eqref{eq_defn_b_l1l2}.
\end{lemma}

\subsubsection{Residue at $w=1-z$}
Looking at the $\zeta_{\mathfrak{a},Q}(w,-z)$, it turns out that the pole only occurs at the $0$-cusp. Hence, the residue term here is:
  \begin{align}
  &\res{w=1-z} \res{s'=w} S_{o_1}^c \notag \\
  = &\lim_{\delta \to 0}  \varphi(Q) G \sum_{l_1,l_2} \conj{\chi(l_1)}\chi(l_2) (l_1l_2)^{\frac{k}{2}+\alpha} l_1^{it}l_2^{-it} \notag \\
  &\times \triinvmellin{\gamma_2}{\gamma_4}{C} \frac{(4\pi)^k 2^{s-\half}Q^{-1}}{\Gamma(s+k-1) 2\sqrt{\pi}} \frac{\pi^{\frac{1}{2}-z}}{\Gamma(\frac{1}{2}-z)} \left( \frac{1}{N}\right)^{\frac{1}{2}-z} \frac{\Gamma(\beta)\Gamma(G-\beta)}{\Gamma(G)}\notag \\
  &\times \mathcal{V}_{N[l_1,l_2]} M(s,\tfrac{z}{i},\delta)  \conj{ \langle U,E_0(*,\half+z) \rangle} \prod_{p^\gamma \| Q} (\sigma_{2z}(p^\gamma) - p^{-(1-2z)} \sigma_{2z}(p^{\gamma-1})) \notag \\
  &\times  \betaF{\frac{3}{2}-z - s - \beta}{s + \beta+ \tfrac{k}{2} - 1+it}{-z+ \tfrac{k+1}{2}+it}  v( \tfrac{1}{2}-z) (xl_1)^{\half-z}  \,d  z \,d  \beta \,d  s \label{eq_S_o1c_sp_lead}
  \end{align}
  Moving the $z$-line of integration to $-C$, and changing variable from $z \mapsto -z$, we see that this term is exactly the same as the residue at $w=1+z$.

\subsubsection{Residue at $w=s+\beta-\frac{1}{2}$}
\begin{align}
  &\res{w=s+\beta-\frac{1}{2}} \res{s'=w} S_{o_1}^c \notag \\
  = &\lim_{\delta \to 0}  \varphi(Q) G \sum_{l_1,l_2} \conj{\chi(l_1)}\chi(l_2) (l_1l_2)^{\frac{k}{2}+\alpha} l_1^{it}l_2^{-it} \notag \\
  &\times \triinvmellin{\gamma_2}{\gamma_4}{C} \frac{(4\pi)^k 2^{s-\half}}{\Gamma(s+k-1) 2\sqrt{\pi}} \frac{\Gamma(\beta)\Gamma(G-\beta)}{\Gamma(G)}v( s+\beta-1)\notag \\
  &\times \mathcal{V}_{N[l_1,l_2]} \sum_\mathfrak{a} \zeta_{\mathfrak{a},Q}(s+\beta-\frac{1}{2},-z) M(s,\tfrac{z}{i},\delta)  \conj{ \langle U,E_\mathfrak{a}(*,\half+z) \rangle}  (xl_1)^{s+\beta-1}  \,d  z \,d  \beta \,d  s \label{eq_S_o1c_sp_be}
\end{align}
Here we move $\re \beta$ to $\frac{1}{4}-\varepsilon$, picking up a simple pole at $\beta = 1-s$. The moved integral is again $O(x^{-\varepsilon})$. The residue is:
\begin{align}
  &\res{\beta=1-s} \res{w=s+\beta-\frac{1}{2}} \res{s'=w} S_{o_1}^c \notag \\
  = &\lim_{\delta \to 0}  \varphi(Q) G \sum_{l_1,l_2} \conj{\chi(l_1)}\chi(l_2) (l_1l_2)^{\frac{k}{2}+\alpha} l_1^{it}l_2^{-it} \notag \\
  &\times \dblinvmellin{\gamma_2}{C} \frac{(4\pi)^k 2^{s-\half}}{\Gamma(s+k-1) 2\sqrt{\pi}} \frac{\Gamma(1-s)\Gamma(G-1+s)}{\Gamma(G)} \notag \\
  &\times \mathcal{V}_{N[l_1,l_2]} \sum_\mathfrak{a} \zeta_{\mathfrak{a},Q}(\frac{1}{2},-z) M(s,\tfrac{z}{i},\delta)  \conj{ \langle U,E_\mathfrak{a}(*,\half+z) \rangle} \,d  z \,d  s \label{eq_S_o1c_sp_be_p}
\end{align}
In order to take the $\delta$-limit, we move $\re s$ down to $\frac{1}{2}-\frac{k}{2}-\varepsilon$, encountering poles at $s=\frac{1}{2} \pm z - r$, where $r$ is an integer such that $0 \leq r \leq \frac{k}{2}$. For $r$ being such an integer, taking limit as $\delta \to 0$, we have the residue:
\begin{align}
  &\lt( \res{s=\frac{1}{2}+z-r} + \res{s=\frac{1}{2}-z-r} \rt) \res{\beta=1-s} \res{w=s+\beta-\frac{1}{2}} \res{s'=w} S_{o_1}^c \notag \\
  = & \varphi(Q) G \sum_{l_1,l_2} \conj{\chi(l_1)}\chi(l_2) (l_1l_2)^{\frac{k}{2}+\alpha} l_1^{it}l_2^{-it} \notag \\
  &\times \invmellin{C} \frac{(-1)^r (4\pi)^k}{2r!} \mathcal{V}_{N[l_1,l_2]} \sum_\mathfrak{a} \zeta_{\mathfrak{a},Q}(\frac{1}{2},-z) \conj{ \langle U,E_\mathfrak{a}(*,\half+z) \rangle} \notag \\
  &\times  \Big( \frac{\Gamma(2z-r) \Gamma(\frac{1}{2}-z+r) \Gamma(\frac{1}{2}-z+r)\Gamma(G-\frac{1}{2}+z-r)}{\Gamma(\frac{1}{2}+z)\Gamma(\frac{1}{2}-z) \Gamma(k-\frac{1}{2}+z-r)\Gamma(G)} \notag \\
  &\qquad + \frac{\Gamma(-2z-r) \Gamma(\frac{1}{2}+z+r) \Gamma(\frac{1}{2}+z+r)\Gamma(G-\frac{1}{2}-z-r)}{\Gamma(\frac{1}{2}+z)\Gamma(\frac{1}{2}-z) \Gamma(k-\frac{1}{2}-z-r)\Gamma(G)} \Big)\,d  z \label{eq_S_o1c_sp_beps}
\end{align}
We see that there is enough exponential decay to guarantee convergence, and that this term is $O(G^{\frac{1}{2}-r+\varepsilon} Q^{\frac{1}{2}+\varepsilon} \mathcal{L}^{3+2\alpha+\varepsilon})$.

For the moved integral, it is treated in the same way as \eqref{eq_S_o1d_lead_moved}, and can be shown to be $O(Q^{\frac{1}{2}+\varepsilon} G^{\frac{1}{2}-\frac{k}{2}-\varepsilon} \mathcal{L}^{3+2\alpha +\varepsilon})$

\subsubsection{The residue at $w=\frac{1}{2}$}
The residue is:
 \begin{align}
  &\res{w=\frac{1}{2}} \res{s'=w} S_{o_1}^c \notag \\
  = &\lim_{\delta \to 0}  \varphi(Q) G \sum_{l_1,l_2} \conj{\chi(l_1)}\chi(l_2) (l_1l_2)^{\frac{k}{2}+\alpha} l_1^{it}l_2^{-it} \notag \\
  &\times \triinvmellin{\gamma_2}{\gamma_4}{C} \frac{(4\pi)^k 2^{s-\half}\mathcal{V}_{N[l_1,l_2]}}{\Gamma(s+k-1) 2\sqrt{\pi}} \sum_\mathfrak{a} \zeta_{\mathfrak{a},Q}(\frac{1}{2},-z)   \conj{ \langle U,E_\mathfrak{a}(*,\half+z) \rangle}\notag \\
  &\times M(s,\tfrac{z}{i},\delta) \betaF{1 - s - \beta}{s + \beta+ \tfrac{k}{2} - 1+it}{\tfrac{k}{2}+it}  \frac{\Gamma(\beta)\Gamma(G-\beta)}{\Gamma(G)} \,d  z \,d  \beta \,d  s  \label{eq_S_o1c_sp_Lw}
  \end{align}
Now we move $\re s$ down to $\frac{1}{2}-\frac{k}{2}-\theta-\varepsilon$ to enable us to take the $\delta$-limit. In doing so, we pick up poles at $s=1-\beta$, $s=1-\frac{k}{2}-\beta-it$, and $s=\frac{1}{2}\pm z -r$, where $0 \leq r \leq \frac{k}{2}$ is an integer.

The residue at $s=1-\beta$ is:
 \begin{align}
  &\res{s=1-\beta} \res{w=\frac{1}{2}} \res{s'=w} S_{o_1}^c \notag \\
  = -&\lim_{\delta \to 0}  \varphi(Q) G \sum_{l_1,l_2} \conj{\chi(l_1)}\chi(l_2) (l_1l_2)^{\frac{k}{2}+\alpha} l_1^{it}l_2^{-it} \notag \\
  &\times \dblinvmellin{\gamma_4}{C} \frac{(4\pi)^k 2^{\half-\beta}}{\Gamma(k-\beta) 2\sqrt{\pi}} M(1-\beta,\tfrac{z}{i},\delta) \notag \\
  &\times \mathcal{V}_{N[l_1,l_2]} \sum_\mathfrak{a} \zeta_{\mathfrak{a},Q}(\frac{1}{2},-z)  \conj{ \langle U,E_\mathfrak{a}(*,\half+z) \rangle}     \frac{\Gamma(\beta)\Gamma(G-\beta)}{\Gamma(G)} \,d  z \,d  \beta \label{eq_S_o1c_sp_Lw_Hs}
  \end{align}
  Changing variable $\beta =1-s$, we see that the contour line becomes $\re s = \frac{1}{2}-\theta-\varepsilon$. This is pretty much the same term as \eqref{eq_S_o1c_sp_be_p} and the bound there applies.
  
Taking the $\delta$-limit, the analysis of the residue at $s=1-\frac{k}{2}-\beta-it$ is very similar to that of \eqref{eq_S_o1d_2ndwLs}, yielding the bound $O(G^{\half+\varepsilon} Q^{\half + \varepsilon} \mathcal{L}^{3 + 2\alpha + \varepsilon})$.
 
%
%
%

The residues at $s=\frac{1}{2} \pm z -r $ are:
 \begin{align}
  &\lt( \res{s=\frac{1}{2}+z-r} + \res{s=\frac{1}{2}-z-r} \rt) \res{w=\frac{1}{2}} \res{s'=w} S_{o_1}^c \notag \\
  = &\lim_{\delta \to 0} \frac{(4\pi)^k }{2\sqrt{\pi}}  \varphi(Q) G \sum_{l_1,l_2} \conj{\chi(l_1)}\chi(l_2) (l_1l_2)^{\frac{k}{2}+\alpha} l_1^{it}l_2^{-it} \notag \\
  &\times \dblinvmellin{\gamma_4}{C} \mathcal{V}_{N[l_1,l_2]}\sum_\mathfrak{a} \zeta_{\mathfrak{a},Q}(\frac{1}{2},-z)\conj{ \langle U,E_\mathfrak{a}(*,\half+z) \rangle} \frac{\Gamma(\beta)\Gamma(G-\beta)}{\Gamma(G)}\notag \\
  &\times  \Big( \frac{c_r(z,\delta) 2^{z-r}}{\Gamma(k-\frac{1}{2}+z-r)}\betaF{\frac{1}{2}-z+r - \beta}{z-r + \beta+ \tfrac{k-1}{2}+it}{\tfrac{k}{2}+it} \notag \\
  &\qquad + \frac{c_r(-z,\delta) 2^{-z-r}}{\Gamma(k-\frac{1}{2}-z-r)}\betaF{\frac{1}{2}+z+r - \beta}{-z-r + \beta+ \tfrac{k-1}{2}+it}{\tfrac{k}{2}+it} \Big) \,d  z \,d  \beta \label{eq_S_o1c_sp_Lws}
  \end{align}
  Similar to the previous section, we can prove the following proposition as in lemma \ref{propo_crazy} with slight modifications:
\begin{lemma} \label{propo_crazy3}
For $G \asymp (1+|t|)^{\frac{2}{3-2\theta}} \log^5 Q$ and $\re \beta = a = \frac{3}{2} + 2r + \varepsilon$,
\begin{align}
  &\lim_{\delta \to 0} G \dblinvmellin{a}{C} \mathcal{V}_{N[l_1,l_2]} \sum_\mathfrak{a} \zeta_{\mathfrak{a},Q}(\half,-z) \conj{ \langle U,E_\mathfrak{a}(*,\half+z) \rangle} \frac{\Gamma(\beta)\Gamma(G-\beta)}{\Gamma(G)}\notag \\
  &\times  \Big( \frac{2^{z-r}c_r(z,\delta)}{2\sqrt{\pi}\Gamma(k-\half + z -r)}  \betaF{\half - z + r - \beta}{ \beta + \tfrac{k-1}{2} + z - r+it}{\tfrac{k}{2}+it} \notag \\
  &\phantom{\times} + \frac{2^{-z-r}c_r(-z,\delta) }{2\sqrt{\pi}\Gamma(k-\half - z -r)} \betaF{\half + z + r -\beta}{ \beta + \tfrac{k-1}{2} - z - r+it}{\tfrac{k}{2}+it} \Big) \,d  z \,d  \beta \notag \\
  \ll & G^{1+\varepsilon} \mathcal{L}^{1-k+\varepsilon} Q^{-\half + \varepsilon} \times [l_1,l_2]^{-\frac{1}{2}} \label{eq_cts_res_bound}
\end{align}
\end{lemma}
By the same arguments that we made after proving lemma \ref{propo_crazy}, we can conclude that the double integral \eqref{eq_S_o1c_sp_Lws} is $O(G^{1+\varepsilon} Q^{\half + \varepsilon} \mathcal{L}^{2 + 2\alpha + \varepsilon})$.

For the moved integral, we estimate with the following lemma, which can be proved in the same way as lemma \ref{propo_crazy}:
\begin{lemma} \label{propo_crazy4}
For $G \asymp (1+|t|)^{\frac{2}{3-2\theta}} \log^5 Q$ and $\re \beta = a = \frac{5}{2} + k + 2\varepsilon$,
\begin{align}
  &\lim_{\delta \to 0} G \triinvmellin{\gamma'_2}{\gamma_4}{C} \frac{(4\pi)^k \mathcal{V}_{N[l_1,l_2]} 2^{s-\half}}{\Gamma(s+k-1) 2\sqrt{\pi}}  \notag \\
  &\qquad \times  \sum_\mathfrak{a} \zeta_{\mathfrak{a},Q}(\half,-z) M(s,\tfrac{z}{i},\delta) \conj{ \langle U,E_\mathfrak{a}(*,\half+z) \rangle} \notag \\
  &\qquad \times \betaF{1 - s-\beta}{s +\beta+\tfrac{k}{2} - 1+it}{\tfrac{k}{2}+it} \frac{\Gamma(\beta)\Gamma(G-\beta)}{\Gamma(G)} \,d  z \,d  \beta \,d  s \notag \\
  \ll & G^{1+\varepsilon} \mathcal{L}^{1-k+\varepsilon} Q^{-\half+\varepsilon} [l_1,l_2]^{-\frac{1}{2}}
\end{align}
\end{lemma}
By the same arguments that we made after stating lemma \ref{propo_crazy2}, we can conclude that the moved triple integral \eqref{eq_S_o1c_sp_Lw} is $O(G^{1+\varepsilon} Q^{\half+\varepsilon} L^{2+2\alpha+\varepsilon})$.

All these results combined together gives proposition \ref{propo_total_cts}.
\end{proof}

\subsection{The residue at $s'=1 \pm z$}
Calculating the contribution of the residue at $s'=1+z$ is very similar to the subcase $w=1+z$ in the previous section, except that we do not have any poles coming from the $v$ functions. As such, we will omit the details and only state the result here:
\begin{lemma}
  For $G \asymp (1+|t|)^\frac{2}{3-2\theta} \log^5 Q$,
\begin{align}
 & \res{s' = 1+z} S_{o_1}^c  \ll G^{1+\varepsilon} Q^{1+\varepsilon} \mathcal{L}^{1+2\alpha+\varepsilon} + x^{-\varepsilon}, \label{prop_s_+z}
\end{align}
\end{lemma}
The residue at $s'=1-z$ can be shown to be the same as $s'=1+z$, and hence has the same contribution as above.
\subsection{The residue at $s'=s+\beta-\frac{1}{2}$}
The residue is:
\begin{align}
  &\res{s'=s+\beta-\frac{1}{2}} S_{o_1}^c \notag \\
  = &\lim_{\delta \to 0}  \varphi(Q) G \sum_{l_1,l_2} \conj{\chi(l_1)}\chi(l_2) (l_1l_2)^{\frac{k}{2}+\alpha} l_1^{it}l_2^{-it} \notag \\
  &\times \quadinvmellin{\gamma_1}{\gamma_2}{\gamma_4}{C} \frac{(4\pi)^k 2^{s-\half}}{\Gamma(s+k-1) 2\sqrt{\pi}} \notag \\
  &\times \mathcal{V}_{N[l_1,l_2]} \sum_\mathfrak{a}  \zeta_{\mathfrak{a},Q}(s+\beta-\frac{1}{2},-z) M(s,\tfrac{z}{i},\delta)  \conj{ \langle U,E_\mathfrak{a}(*,\half+z) \rangle}  \notag \\
  &\times v( s+\beta-\frac{1}{2}-w) v( w-\half) x^{s+\beta-1} l_2^{s+\beta-\frac{1}{2}-w} l_1^{w-\half} \frac{\Gamma(\beta)\Gamma(G-\beta)}{\Gamma(G)} \,d  z \,d  \beta  \,d  s \,d  w \label{eq_S_o1c_Lsp}
\end{align}
Moving the line of integration of $\re \beta$ to $\frac{1}{4}-\varepsilon$ does not pass through any poles, and hence this is $O(x^{-\varepsilon})$.

\section{Proof of Theorem \ref{thm_main}}
Putting the results of the sections together (in particular, propositions \ref{eq_S_d1_prop}, \ref{eq_S_d2_propo}, \ref{eq_S_o1d_Zres_propo}, \ref{eq_S_o1d_Z_sp2}, and \ref{propo_total_cts}), we obtain that $S$ as defined in $\eqref{eq_S_defn}$ has the following bound:
\[ S = O(G^{1+\varepsilon} Q^{1+\varepsilon} \mathcal{L}^{1+2\alpha + \varepsilon}) + O(G^{1+\varepsilon} Q^{\half + \theta+\varepsilon} \mathcal{L}^{3+2\alpha+ \varepsilon}) + O(x^{-\varepsilon}), \]
where $G \asymp (1+|t|)^{\frac{2}{3-2\theta}} \log^5 Q$ and $\alpha = \frac{1}{\log (Q(1+|t|))}$.

Plugging this into the right-hand side of proposition \ref{propo_amp}, we have:
\begin{align*}
 &|L(\half + it, f_\chi)|^2 |\sum_{l \sim \mathcal{L}} 1|^2 \\
 \ll & (1+|t|)^{\frac{2}{3-2\theta}+\varepsilon} \lt( Q^{1+\varepsilon} \mathcal{L}^{1+2\alpha + \varepsilon} + Q^{\half +\theta+ \varepsilon} \mathcal{L}^{3+2\alpha+ \varepsilon} \rt) + x^{-\varepsilon} + Q\mathcal{L} + (1+|t|)^\varepsilon Q^{1+\varepsilon}
\end{align*}

Taking $x \to \infty$, we can drop the $x$-term above. Note that
\[ \sum_{\substack{l \sim \mathcal{L} \\ l \text{ prime} \\ (l,QN) = 1}} 1 \asymp \frac{\mathcal{L}}{\log \mathcal{L}}. \]

Hence, we can conclude that:
\begin{align*}
 |L(\half + it, f_\chi)|^2 \ll &(1+|t|)^{\frac{2}{3-2\theta}+\varepsilon} \lt( Q^{1+\varepsilon} \mathcal{L}^{-1+2\alpha + \varepsilon} +  Q^{\half +\theta+ \varepsilon} \mathcal{L}^{1+2\alpha+ \varepsilon} \rt)
\end{align*}

In order to balance the effects of the two terms, we set $L = Q^{\frac{1}{4} - \frac{\theta}{2} + \varepsilon}$. Recalling $\alpha = \frac{1}{\log (Q(1+|t|))}$, we have:
\[ |L(\half + it, f_\chi)|^2 \ll (1+|t|)^{\frac{2}{3-2\theta}+\varepsilon} Q^{\frac{3}{4}+\frac{\theta}{2}+\varepsilon} \]

Taking square roots of the above, this is the theorem. 

%

\bibliographystyle{abbrv}
\bibliography{bibfile}

\end{document}